\theoremstyle{plain}
\newtheorem{theorem}{Theorem}[section]
\newtheorem{lemma}[theorem]{Lemma}
\newtheorem{proposition}[theorem]{Proposition}
\newtheorem{corollary}[theorem]{Corollary}
\theoremstyle{definition}
\newtheorem{definition}[theorem]{Definition}
\newcommand{\defemph}[1]{\emph{\textbf{#1}}}
\newcommand{\defined}{{\downarrow}}
\newcommand{\pair}[1]{\langle #1 \rangle}
\newcommand{\NN}{\mathbb{N}}
\newcommand{\NNx}{{\NN_\infty}}
\newcommand{\wCPO}{$\omega$-cpo}
\newcommand{\wCPOs}{$\omega$-cpos}
\newcommand{\Sup}[1]{{\textstyle \bigvee_{#1}}}
\newcommand{\ScottOpen}[1][]{\mathcal{S}_{#1}}
\newcommand{\Compact}[1]{\mathcal{K}(#1)}
\newcommand{\wayb}{\ll}
\newcommand{\parto}{\rightharpoonup}
\newcommand{\multito}{\rightrightarrows}
\newcommand{\inhab}[1]{\mathcal{P}_{\!{*}}(#1)}
\newcommand{\supp}[1]{\|#1\|}
\newcommand{\inv}[1]{#1^{*}}
\newcommand{\dom}[1]{\mathrm{dom}(#1)}
\newcommand{\set}[1]{\{#1\}}
\newcommand{\such}{\mid}
\newcommand{\one}{\mathsf{1}}
\newcommand{\two}{\mathsf{2}}
\newcommand{\upper}{{\uparrow}}
\newcommand{\cf}{\varphi}
\newcommand{\pcm}{\NN \to \lift{\NN}} % partial computable maps
\newcommand{\liff}{\Leftrightarrow}
\newcommand{\lthen}{\Rightarrow}
\newcommand{\lift}[1]{{#1}_{\!\bot}}
\newcommand{\Hom}[1]{\mathsf{Hom}(#1)}
\newcommand{\nbhmap}[1][]{\Phi_{#1}}
\newcommand{\nbh}[2][]{\nbhmap[#1](#2)}
\newcommand{\all}[1]{\forall #1.\,}
\newcommand{\some}[1]{\exists #1.\,}
\newcommand{\apart}{\mathbin{\#}}
\newcommand{\topol}[1]{\langle #1 \rangle}
\title[Spreen spaces and the KLST theorem]{Spreen spaces and the synthetic\\ Kreisel-Lacombe-Shoenfield-Tseitin theorem}
\author{Andrej Bauer}
\email{Andrej.Bauer@andrej.com}
\urladdr{https://www.andrej.com/}
\address{Faculty of Mathematics and Physics, University of Ljubljana, Slovenia}
\address{Institute of Mathematics, Physics and Mechanics, Ljubljana, Slovenia}
\thanks{This material is based upon work supported by the Air Force Office of Scientific Research under award number FA9550-21-1-0024.}
\subjclass[2020]{Primary 03F60; Secondary 03F55}
\begin{document}

\begin{abstract}
  I take a constructive look at Dieter Spreen's treatment of effective topological spaces and the Kreisel-Lacombe-Shoenfield-Tseitin (KLST) continuity theorem.
  Transferring Spreen's ideas from classical computability theory and numbered sets to a constructive setting
  leads to a theory of topological spaces, in fact two of them: a locale-theoretic one embodied by the notion of $\sigma$-frames, and a pointwise one that follows more closely traditional topology.
  Spreen's notion of effective limit passing turns out to be closely related to sobriety,
  while his witnesses for non-inclusion give rise to a novel separation property --
  any point separated from an overt subset by a semidecidable subset is already separated from it by an open one.
  I name spaces with this property \emph{Spreen spaces}, and show that they give rise to a purely constructive continuity theorem: every map from an overt Spreen space to a pointwise regular space is pointwise continuous. The theorem is easily proved, but finding non-trivial examples of Spreen spaces is harder.
  I show that they are plentiful in synthetic computability theory.
\end{abstract}

\maketitle

\section{Introduction}
\label{sec:introduction}

Dieter Spreen gave a comprehensive treatment~\cite{Spreen98} of topological spaces in the context of numbered sets and computability theory. He defines an effective topological space to be a topological space with given numberings of its points and basic opens, equipped with a suitable additional computability structure. The definition is quite general, as it encompasses recursive metric spaces, effective domains, and other kinds of computable spaces. Among many results we find a novel generalization of the fundamental theorem of Kreisel, Lacombe, Shoenfield and Tseitin (KLST) about effective pointwise continuity of effective functions.

In this paper I transfer Spreen's work to constructive mathematics and to the setting of synthetic computability~\cite{Bauer06,BauerLesnik12,bauer17}.
The result is a satisfying account of countably based topological spaces (\cref{sec:topological-spaces}). 
There are in fact two variants, a localic one in the style of synthetic topology~\cite{escardo04:_synth,Lesnik10,BauerLesnik12}, as embodied by the notion of $\sigma$-frames (\cref{sec:sigma-frames-malcev}), and a pointwise one that follows more closely traditional topology (\cref{sec:pointwise-notions}). These are shown to be inequivalent in the effective topos  (\cref{prop:Spreen_equiv_RE}).

Effective topological notions, such as effective separability, effective regularity, effective continuity, etc., readily translate to standard topological concepts. It is less obvious that Spreen's effective limit passing is closely related to sobriety (\cref{prop:cb-sober-limit-passing}), and that his witnesses for non-inclusion turn out to be a new separation property involving overt subsets. We call spaces satisfying it \emph{Spreen spaces} (\cref{def:spreen-space}).

In constructive mathematics Spreen's formulation of the KLST theorem becomes a continuity principle stating that all maps from an overt Spreen space to a regular spaces are pointwise continuous (\cref{thm:synthetic-klst}). The proof is very simple, even trivial, but to show that all countably based sober spaces are Spreen spaces (\cref{thm:cb-sober-spreen}) we have to work harder and employ the full power of the axioms of synthetic computability (\cref{sec:synth-comp}). We must even adopt a new synthetic axiom, the Stable Subspace Axiom, to get the desired results.

Proofs and constructions in the theory of numbered sets are intensional in nature, i.e., one refers to realizers of elements and computations on them, thereby performing steps that need not respect equality of mathematical objects. In the synthetic setting this approach is not available, as one works directly with mathematical objects. To cope with this phenomenon, we employ both partial and multivalued maps (\cref{sec:partial-multi-valued}). We use the synthetic Recursion theorem (\cref{thm:recursion-theorem}), which provides fixed points of multivalued maps, to derive a sequential continuity principle for multivalued open sets (\cref{thm:multi-wso}). The principle allows us to recast the so called ``waiting arguments'' of computability theory to constructive statements about sequential limits.

Unless stated otherwise, we work constructively~\cite{bishop67} in higher-order intuitionistic logic with Dependent Choice. We strive to minimize the uses of Dependent Choice by replacing it with Countable Choice whenever possible, and never silently apply any non-logical axioms.
Even though we habitually speak of sets, everything we do can be interpreted in the internal language of a topos, so long as universal quantifications over all sets or objects of a certain kind are understood schematically as statements about all objects of a topos.

\section{Semidecidable truth values and subsets}
\label{sec:semid-truth-valu}

Under the traditional view of topology the open subsets of a topological space may be arbitrarily complex. In an extreme case, the topology comprises the whole powerset of the underlying set, which is much too liberal from the point of view of computability theory, where it is natural to require that open sets be semidecidable. So we first need a notion of semidecidable sets which works well constructively.

The \defemph{Rosolini dominance $\Sigma$}~\cite{rosolini86} is the set of those truth values that are equivalent to existential quantification of a decidable predicate on $\NN$,
\begin{equation*}
  \Sigma = \set{
    p \in \Omega \such
    \some{f \in 2^\NN} (p \liff \some{n \in \NN} f(n) = 1)
  },
\end{equation*}
where $\Omega$ is the set of all truth values (the subobject classifier).
We call the elements of $\Sigma$ the \defemph{semidecidable truth values}. The name is justified because $\Sigma$ contains $\bot$ and $\top$, and is closed under finite meets and countable joins, thanks to Countable Choice.
For any set $X$ we call the elements of $\Sigma^X$ the \defemph{semidecidable subsets} of~$X$. (In general we write $Y^X$ or $X \to Y$ for the set of all functions from $X$ to $Y$.)

We remark that in the effective topos $\Sigma$ is the numbered set with two elements~$\bot$ and~$\top$, realized by the (codes of) the non-terminating and terminating computations, respectively.  If $X$ is a numbered set then the exponential $\Sigma^X$ is the numbered set of completely computably enumerable subsets of~$X$, with and acceptable numbering. In classical mathematics $\Sigma = \Omega$, because all truth values are decidable and hence also semidecidable, and $\Sigma^X$ is just the powerset of~$X$.

A notion related to semidecidability is overtness. Say that a subset $T \subseteq X$ is \defemph{overt} when existential quantification over~$T$ preserves semidecidability, i.e., for any $\phi : X  \to \Sigma$ the truth value $\some{x \in T} \phi(x)$ is semidecidable. We say that a space $X$ is overt when it is overt as a subset of itself. The natural numbers are overt, and under further assumptions also separable sober spaces, see \cref{prop:separable-overt}. Synthetic topology has much to say about overtness, which is dual to compactness, but we shall limit ourselves just to two observations: the inverse image of an overt subset is overt, and a semidecidable subset of an overt set is overt.

A set $X$ is \defemph{countable} when there is a surjection $e : \NN \to \one + X$, called an \defemph{enumeration} of~$X$, where $\one = \set{\star}$ is a singleton and $+$ stands for the disjoint sum. Thus an enumeration is allowed to output the special element $\star$ when it wants to ``skip''. An enumeration which always skips enumerates the empty set. An inhabited set may be enumerated without skipping.

The semidecidable subsets of $\NN$ are precisely the countable ones. Indeed, if $S \subseteq \NN$ is enumerated by $e$ then $n \in S$ is semidecidable, as it is equivalent to $\some{m \in \NN} n = e(m)$. Conversely, if $S \subseteq \NN$ is semidecidable, then there exists by countable choice a map $p : \NN \to \two^\NN$ such that $m \in S$ is equivalent to $\some{n \in \NN} p(m)(n) = 1$, hence an enumeration of $S$ is given by
\begin{equation*}
  e (\pair{m, n}) =
  \begin{cases}
    m & \text{if $p(m)(n) = 1$,}\\
    \star & \text{if $p(m)(n) = 0$.}
  \end{cases}
\end{equation*}
where $\pair{{-}, {-}}$ is a pairing function witnessing that $\NN \times \NN$ is isomorphic to $\NN$.

\section{Topological spaces}
\label{sec:topological-spaces}

A fundamental insight of synthetic topology is that the semidecidable subsets $\Sigma^X$ behave like a topology on~$X$. Because $\Sigma^X$ is not something we impose, but is rather present already, we call $\Sigma^X$ the \defemph{intrinsic topology} of~$X$. In Spreen's theory of effective spaces the intrinsic topology of a numbered set is known as the \emph{Ershov topology}, while a \emph{Malcev topology} is one that is coarser than the Ershov topology.

\subsection{Malcev topologies and $\sigma$-frames}
\label{sec:sigma-frames-malcev}

We should be careful about forming unions of open sets, because arbitrary unions need not preserve semidecidability.
They are preserved by countable unions, though, which prompts the following definition.

\begin{definition}
  A \defemph{$\sigma$-frame} is a partially ordered set with finite meets and countable joins, with the former distributing over the latter. A \defemph{Malcev topology} on a set $X$ is a $\sigma$-subframe of the intrinsic topology~$\Sigma^X$.
\end{definition}

Henceforth we call Malcev topologies just ``topologies'', as these are the only ones we are interested in. We write $(X, \mathcal{T})$ for a set $X$ with a Malcev topology $\mathcal{T}$. Next, we need a notion of continuity.

\begin{definition}
  A \defemph{$\sigma$-homomorphism} is a map between $\sigma$-frames which preserves finite meets and countable joins.
  A map $f : X \to Y$ between spaces $(X, \mathcal{T}_X)$ and $(Y, \mathcal{T}_Y)$ is \defemph{continuous} if
  its inverse image map $\inv{f}$ maps $\mathcal{T}_Y$-open sets to $\mathcal{T}_X$-open sets, in which case
  it is a $\sigma$-homomorphism $\inv{f} : \mathcal{T}_Y \to \mathcal{T}_X$.
\end{definition}

We write $\Hom{L,M}$ for the set of $\sigma$-homomorphisms between $\sigma$-frames~$L$ and~$M$.
Note that the meets and joins of a Malcev topology are computed as intersections and unions, respectively, because they are inherited from the intrinsic topology. The prototypical example of a topology is the metric topology in which we take as the open sets the countable unions of open balls. The open balls are semidecidable because strict inequality~$<$ on $\mathbb{R}$ is semidecidable.

If a metric space is separable, its topology is generated by a countable collection of open balls. In general, we shall be interested in those topologies that are generated by a countable family of opens.

\begin{definition}
  A \defemph{countable base} for a $\sigma$-frame $L$ is a family $(b_n)_{n \in \NN}$ of its elements, which we call \defemph{basic}, such that, for every $x \in L$, there is a countable set $I \subseteq \NN$ for which $x = \bigvee_{i \in I} b_i$.
\end{definition}

\noindent
When is a family of semidecidable sets the base of a topology? The answer is analogous to the familiar one from classical topology.

\begin{proposition}
  A family $(B_n)_{n \in \NN}$ of semidecidable subsets of $X$ is the base of a topology on $X$ if, and only if, the intersection of any two basic sets $B_i \cap B_j$ is a countable union of basic sets. In this case the unique topology $\topol{B}$ which has $B$ as its base is the collection of all countable unions of the basic sets.
\end{proposition}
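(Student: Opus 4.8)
The plan is to prove both directions of the biconditional, and then verify the concluding claim about $\topol{B}$.

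First I would establish necessity. Suppose $(B_n)_{n \in \NN}$ is the base of some topology $\mathcal{T}$. Then each $B_n$ lies in $\mathcal{T}$, and since $\mathcal{T}$ is a $\sigma$-frame it is closed under finite meets; these meets are computed as intersections because $\mathcal{T}$ is a sub-$\sigma$-frame of the intrinsic topology $\Sigma^X$. Hence $B_i \cap B_j \in \mathcal{T}$, and by the definition of a countable base there is a countable $I \subseteq \NN$ with $B_i \cap B_j = \bigcup_{k \in I} B_k$. This direction is essentially immediate from unfolding definitions.

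The more substantial direction is sufficiency. Assume the intersection condition: for all $i, j$ there is a countable $I_{i,j}$ with $B_i \cap B_j = \bigcup_{k \in I_{i,j}} B_k$. I would define $\topol{B}$ to be the collection of all countable unions of basic sets, $\set{\bigcup_{k \in I} B_k \such I \subseteq \NN \text{ countable}}$, and show it is a Malcev topology. Since each element is a countable union of semidecidable sets, and $\Sigma$ is closed under countable joins, every element of $\topol{B}$ is semidecidable, so $\topol{B} \subseteq \Sigma^X$. Closure under countable joins is clear, since a countable union of countable unions of basic sets is again a countable union of basic sets (using that a countable union of countable index sets is countable). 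The crux is closure under finite (binary) meets: given $U = \bigcup_{i \in I} B_i$ and $V = \bigcup_{j \in J} B_j$, distributivity of intersection over union gives $U \cap V = \bigcup_{i \in I, j \in J} (B_i \cap B_j)$, and replacing each $B_i \cap B_j$ by $\bigcup_{k \in I_{i,j}} B_k$ exhibits $U \cap V$ as a countable union of basic sets, hence a member of $\topol{B}$. I expect the main obstacle here to be managing countability constructively: the index set of the double union is indexed by $I \times J$, and for each pair one invokes the hypothesis, so I would want to be careful that the combined index family is genuinely a countable subset of $\NN$ — this is where Countable Choice enters, to select a single enumeration of the witnessing sets $I_{i,j}$ simultaneously, after which $\bigcup_{i,j} I_{i,j}$ is a countable union of countable sets and therefore countable.

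Finally I would confirm the stated uniqueness and the explicit description. Any topology containing all the $B_n$ must, being closed under countable joins, contain every countable union of basic sets, so it contains $\topol{B}$; and having $B$ as a base forces it to consist of \emph{only} such unions, so it equals $\topol{B}$. Thus $\topol{B}$ is the unique topology with $B$ as its base, and it is exactly the collection of all countable unions of basic sets, as claimed.
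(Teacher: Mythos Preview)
Your proposal is correct and follows essentially the same approach as the paper's proof: unfold the definition of a countable base for necessity, and for sufficiency take the collection of countable unions of basic sets and check closure under finite intersections via distributivity and the hypothesis. The paper's proof is much terser and does not spell out the use of Countable Choice or the uniqueness argument, so your version is a more detailed elaboration of the same idea rather than a different route.
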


\begin{proof}
  Clearly, if $(B_n)_{n \in \NN}$ is the base of a topology, then the intersection of any two basic sets, itself being an element of the topology, must be a countable union of basic sets. Conversely, if $(B_n)_{n \in \NN}$ has the required property, then the collection of all countable unions of its elements forms a $\sigma$-frame because it is closed under finite intersections by the required property. By design $(B_n)_{n \in \NN}$ is its base.
\end{proof}

A \defemph{countably based space $(X, \topol{B})$} is a set $X$ with a topology that is generated by a chosen countable base $B : \NN \to \Sigma^X$.

\subsection{Pointwise topology and continuity}
\label{sec:pointwise-notions}

We may formulate topological notions by emphasizing the points. Given a family $(B_i)_{i \in I}$ of semidecidable subsets of~$X$, say that $U \subseteq X$ is \defemph{pointwise open} with respect to $(B_i)_{i \in I}$ if for every $x \in U$ there is $i \in I$ such that $x \in B_i \subseteq U$. In other words, whereas open sets are overt unions of basic opens, the pointwise open sets are arbitrary unions of opens.

We say that the family $(B_i)_{i \in I}$ is a \defemph{pointwise base} if $B_i \cap B_j$ is pointwise open with respect to the family, for all $i, j \in I$. The topology generated by such a base consists of all unions of basic opens, and is closed under finite intersections and all unions.
A map is \defemph{pointwise continuous} when its inverse image takes pointwise open set to pointwise open sets.

% In a countably based space every open subset is pointwise open. Conversely, we may ask whether every semidecidable pointwise open set is open.\footnote{It would be too much to ask that every pointwise open set be open, as that would imply that every truth value is semidecidable (given a truth value $p$, consider the pointwise open subset $U = \set{\star \such p}$ of the one-point space), which implies the non-constructive Kripke's schema ``every subset of $\NN$ is countable''~\cite[4.9.2]{Troelstra:van-Dalen:88:1}.} The answer is closely related to the following property.

% \begin{definition}
%  A space has the \defemph{strong Lindelöf property} when every open cover of a semidecidable set has a countable subcover.
% \end{definition}

% \begin{proposition}
%   \label{prop:lindelof-passage}
%   In a countably based space with the strong Lindelöf property every semidecidable pointwise open set is open, and every pointwise continuous map on such a space is continuous.
% \end{proposition}

% \begin{proof}
%   A semidecidable pointwise open set $S \subseteq X$ is covered by the family of those basic open sets that are contained in it. The strong Lindelöf property implies that there is a countable subcover, which makes $S$ open. The passage from pointwise continuity to continuity is just as easy.
% \end{proof}

% \noindent
% We shall see later in \cref{thm:sober-lindelof}, that in the effective topos nice enough spaces have the strong Lindelöf property, and so for those pointwise continuity will imply continuity.

An obvious question to ask is whether passing to the pointwise notions makes a difference,
since it does not in classical mathematics.

\begin{proposition}
  \label{prop:Spreen_equiv_RE}%
  In the effective topos there is a countable pointwise base which is not a countable base.
\end{proposition}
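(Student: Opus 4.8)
The plan is to exploit the single structural difference between the two notions. To be the base of a topology a family must express each pairwise intersection $B_i \cap B_j$ as a \emph{countable}, i.e. $\Sigma$-indexed, union of basic sets, whereas to be a pointwise base it suffices that each point of $B_i \cap B_j$ be caught in some basic set contained in it, i.e. that the pointwise existential $\all{x \in B_i \cap B_j} \some{k} (x \in B_k \wedge B_k \subseteq B_i \cap B_j)$ hold. The gap to be exhibited is precisely the gap between such a pointwise existential and an enumeration of a covering subfamily. Since $\Sigma$ is closed under finite meets, every $B_i \cap B_j$ is automatically a semidecidable subset, so the obstruction will not be semidecidability of the intersection itself, but the impossibility of enumerating a subfamily that covers it.

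First I would record why the carrier cannot be overt, as this dictates the whole construction. If $X$ is overt then each $B_i \cap B_j$ is a semidecidable, hence overt, subset of $X$; over $\NN$ such a set is countable and therefore admits an enumeration, and feeding that enumeration into the pointwise-openness hypothesis, one picks by Countable Choice a basic set $B_{k_\ell} \subseteq B_i \cap B_j$ around the $\ell$-th enumerated point. The resulting index set $\set{k_\ell \such \ell \in \NN}$ is semidecidable and its basic sets cover $B_i \cap B_j$, so the intersection is genuinely open. Thus over $\NN$ — and likewise over separable sober spaces, which are overt — the two notions coincide, and no counterexample can live there. The example must therefore be carried by a non-overt $X$, with the non-overtness being exactly what blocks the passage from the pointwise cover to an enumerable one.

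Accordingly I would take $X$ to be a non-overt subobject of $\NN$ and a countable family $B : \NN \to \Sigma^X$ containing two distinguished sets $B_0, B_1$ with $B_0, B_1 \not\subseteq U$, where $U := B_0 \cap B_1$, together with a supply of ``small'' basic sets arranged so that (i) every point of $U$ lies in a small basic set contained in $U$, making the family a pointwise base, while (ii) no semidecidable index set selects a subfamily of the small sets covering $U$. To force (ii) I would engineer the small basic sets so that deciding which of them are nonempty, or are contained in $U$, encodes the non-semidecidable data defining $X$; an enumerable cover of $U$ would then render $U$ overt, and hence would semidecide that data, contradicting the choice of $X$. A first and instructive attempt, taking $X$ to be the complement of the halting set, fails: its membership is pointwise decidable, so any relatively semidecidable subset is the restriction of a semidecidable subset of $\NN$ and a pointwise cover can always be padded out with harmlessly empty basic sets to an enumerable one. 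The carrier must instead be a non-overt subobject whose points carry genuine realizer information that is not recoverable by merely enumerating $U$.

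The hard part will be threading the needle between (i) and (ii). Pointwise-openness in (i) is itself an internal statement, so it comes with a uniform realizer that reads an index of a basic neighbourhood off a realizer of the point $x \in U$; the delicate point is that this realizer consumes information carried by the realizer of $x$ which is genuinely absent from any enumeration of $U$ as a subset of $\NN$, so it does not assemble into a global semidecidable cover. Making this precise is where the non-overtness of the carrier is used at the level of realizability, and I expect the only genuinely non-trivial step to be the verification of (ii): proving that no semidecidable index set covers $U$, carried out as a reduction of such a cover to a semidecision of the non-semidecidable data defining $X$.
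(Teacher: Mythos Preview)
Your strategic analysis is sound and aligns with the paper's construction: a non-overt carrier, two ``large'' basic sets $B_0, B_1$, a stock of ``small'' ones (in the paper, singletons $B_{n+2} = \{n\} \cap X$) that pointwise cover $B_0 \cap B_1$, and realizers of points carrying the information that makes the pointwise existential hold. But the proposal stops precisely where the difficulty begins, and your anticipated verification of~(ii) does not work as stated.

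The obstacle is the padding you yourself flag. If a semidecidable index set $W$ gives $B_0 \cap B_1 = \bigcup_{k \in W} B_k$, then the c.e.\ set $C = \{n \such n{+}2 \in W\}$ must satisfy $U \subseteq C$ and $C \cap (X \setminus U) = \emptyset$, where $U = B_0 \cap B_1$; but nothing prevents $C$ from containing points of $\NN \setminus X$, where the corresponding singletons are empty. Hence such a cover does \emph{not} semidecide $U$ or $X$, and your proposed reduction ``cover $\Rightarrow$ semidecision of the defining data'' collapses. What is actually needed is a recursion-theoretic configuration in which no c.e.\ set separates $U$ from $X \setminus U$ even with arbitrary contamination from $\NN \setminus X$; merely taking $U$ non-c.e.\ or $X$ non-overt is not enough to secure this.

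The paper supplies the missing ingredient: a \emph{maximal c.e.\ set} $M$. One takes $I = B_0 \cap B_1$ to be a non-c.e.\ superset of $M$, and arranges $X \setminus I$ infinite. Then any c.e.\ $C \supseteq I$ has $C \setminus M$ infinite, so maximality forces $\NN \setminus C$ finite, whence $C$ meets $X \setminus I$ --- contradiction. Equally essential, and only hinted at in your last paragraph, is that the carrier is not a $\lnot\lnot$-stable subobject of $\NN$ but a numbered set with a bespoke numbering in which each code for $n$ records whether $n \in B_0$, $n \in B_1$, both, or neither; this is what makes $B_0, B_1$ semidecidable in $X$ despite being non-c.e.\ as subsets of $\NN$, and what realizes the pointwise-base condition. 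Without these two concrete devices your outline is not yet a proof.
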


\begin{proof}
  In this proof we work in classical mathematics, and in particular classical computability theory. Recall from computability theory that there exists an infinite and coinfinite computably enumerable (c.e.) set $M$ such that whenever $M \subseteq K$ and $K$ is~c.e., then $K\setminus M$ or $\NN \setminus K$ is finite. Such a set is called a \emph{maximal c.e.~set}.\footnote{I thank Douglas Cenzer for identifying maximal c.e.~sets as the essential ingredient of this proof.} A proof of their existence may be found in~\cite[Chap.~X, Sect.~3]{Soare:87}.

  We are going to construct the relevant objects of the effective topos as numbered sets. Let $I$ be a superset of $M$ that is coinfinite and not~c.e. There are coinfinite sets $U$ and $V$ that are not~c.e.\ such that $U \cup V$ is coinfinite, $I = U \cap V$, $U \setminus I$ is infinite, and $V \setminus I$ is infinite. Let $X$ be a superset of $U \cup V$ that is not~c.e., is coinfinite, and $X \setminus (U \cup V)$ is infinite.

  As our numbered set of points we take $X$ with the numbering $\nu_X : \NN \parto X$ defined by
 \begin{align*}
    \nu_X (4 n) = n &\iff n \in X \setminus (U \cup V),\\
    \nu_X (4 n + 1) = n &\iff n \in U,\\
    \nu_X (4 n + 2) = n &\iff n \in V,\\
    \nu_X (4 n + 3) = n &\iff n \in U \cap V.
  \end{align*}
  If $0 \leq j \leq 3$ and $n \not\in X$ then $\nu_X(4 n + j)$ is undefined. Next, we need a pointwise base $B : \NN \to \Sigma^X$ on $X$, which we define as:
  \begin{align*}
    B_0 &= U,
    &
    B_1 &= V,
    &
    B_{n+2} &= \set{n} \cap X.
  \end{align*}
  Let us verify that $B$ is a pointwise base on $X$. The relation $\nu_X(i) \in B_n$ is completely~c.e.\ because, for all $n, k \in \NN$ and $i, j \in \set{0,1}$ such that $4 k + 2 j + i \in \dom{\nu_X}$,
  \begin{equation*}
    \nu_X({4 k + 2 j + i}) \in B_n \iff
    \begin{aligned}[t]
      &(n = 0 \Rightarrow i = 1) \ \land \\
      &(n = 1 \Rightarrow j = 1) \ \land \\
      &(n \geq 2 \Rightarrow k = n - 2).
    \end{aligned}
  \end{equation*}
  The right-hand side is even decidable. So $B_n$ is indeed a semidecidable subset of $(X, \nu_X)$, uniformly in~$n$. Let $s : \NN^3 \parto \NN$ be the function defined for $i, j \in \set{0, 1}$, $k, m, n \in \NN$ by
  \begin{equation*}
    s (4 k + 2 j + i, m, n) = k + 2.
  \end{equation*}
  Suppose $\nu_X(4 k + 2 j + i)$ is defined and $\nu_X(4 k + 2 j + i) \in B_m \cap B_n$. Then clearly we have
  \begin{equation*}
    \nu_X(4 k + 2 j + 1) = k \in \set{k} = B_{s(4 j + 2 j + i, m, n)}
    \subseteq B_m \cap B_n.
  \end{equation*}
  This shows that $B$ is a pointwise base.
  
  To see that $B$ is not a base, assume to the contrary that it is. Then for some total recursive function $r : \NN \times \NN \to \NN$ we would have
  \begin{equation*}
    I = U \cap V = B_0 \cap B_1
    = \bigcup \set{B_k \such k \in W_{r(0,1)}},
  \end{equation*}
  where $W$ is a standard numbering of c.e.~sets. Since $I$ contains neither $U$ nor $V$, it follows that $0 \not\in W_{r(0,1)}$ and $1 \not\in W_{r(0,1)}$. The set
  \begin{equation*}
    C = \set{k - 2 \such k \in W_{r(0,1)}}
  \end{equation*}
  is a c.e.~set and $M \subseteq I \subseteq C$. Since $I \setminus M$ is infinite, $C \setminus M$ is infinite. It follows from maximality of~$M$ that $\NN \setminus C$ is finite, but this is only possible if $C \cap (X \setminus I) \neq \emptyset$. Pick some $k \in C \cap (X \setminus I)$. Then on one hand $k \in X \setminus I$, and on the other $k \in B_{k+2} \in I$ because $k+2 \in W_{r(0,1)}$. This is a contradiction.
\end{proof}

\subsection{The neighborhood filter and sober spaces}
\label{sec:neighb-filt-sober}

We may ask under what circumstances the points of a space $(X, \mathcal{T})$ can be recovered from its topology.
Define the \defemph{neighborhood filter} $\hat{\Box} : X \to \Hom{\mathcal{T}, \Sigma}$ by $\hat{x}(U) = (x \in U)$ for $x \in X$ and $U \in \mathcal{T}$. The $\sigma$-homomorphism $\hat{x}$ contains as much information about~$x$ as $\mathcal{T}$ can provide. Thus, if the neighborhood filter can be inverted, the points of $X$ can be recovered from $\mathcal{T}$.

\begin{definition}
  A space is \defemph{sober} if its neighborhood filter is a bijection.
\end{definition}

For a countably based space $(X, \topol{B})$ we can also define the \defemph{basic neighborhood filter} $\nbhmap[X] : X \to \Sigma^\NN$ as the map
\begin{equation*}
  \nbh[X]{x} = \hat{x} \circ B = \set{i \in \NN \such x \in B_i},
\end{equation*}
which assigns to a point the set of indices of its basic neighborhoods. When no confusion could arise, we write $\nbhmap$ instead of $\nbhmap[X]$.
The neighborhood filter may be recovered from the basic neighborhood filter. Indeed, for any $x \in X$ and $U \in \topol{B}$ we have
\begin{equation*}
  \hat{x}(U) = (\some{n \in \NN} B_n \subseteq U \land \nbh{x}).
\end{equation*}

We give two examples of sober spaces, one arising from metrics and the other from partial orders.

\begin{proposition}
  \label{prop:csm-sober}%
  A complete separable metric space is sober with respect to the metric topology.
\end{proposition}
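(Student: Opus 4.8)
The plan is to show that the neighborhood filter $\hat{\Box} : X \to \Hom{\mathcal{T}, \Sigma}$ is both injective and surjective, where $\mathcal{T}$ is generated by the balls $B(a,q)$ with $a$ ranging over a countable dense sequence $(a_n)_{n \in \NN}$ and $q$ over positive rationals. Injectivity is the easy half: assuming $\hat{x} = \hat{y}$, so that $x$ and $y$ lie in exactly the same basic balls, I would derive $\neg(d(x,y) > 0)$, which forces $d(x,y) = 0$ and hence $x = y$. Concretely, if $d(x,y) = r > 0$, density supplies a center $a$ with $d(x,a) < r/3$ together with a rational $q$ satisfying $d(x,a) < q \leq 2r/3$; then $x \in B(a,q)$, while $d(a,y) \geq r - d(x,a) > 2r/3 \geq q$ gives $y \notin B(a,q)$, contradicting $\hat{x}(B(a,q)) = \hat{y}(B(a,q))$.

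The real work is surjectivity. Given $\phi \in \Hom{\mathcal{T}, \Sigma}$, I would recover a point by building a Cauchy sequence of centers of $\phi$-accepted balls and then invoking completeness. Since $\phi$ preserves the top element $X$ and since $X = \bigcup_n B(a_n, 2^{-k})$ for each $k$, preservation of countable joins (computed in $\Sigma$ as an existential quantification) produces, for every $k$, an index with $\phi(B(a_{n_k}, 2^{-k})) = \top$; one application of Countable Choice fixes such a sequence of balls $B_k := B(c_k, 2^{-k})$. Because $\phi$ preserves binary meets, $\phi(B_k \cap B_l) = \top \neq \bot = \phi(\emptyset)$, so $B_k \cap B_l$ is not empty; ruling out $d(c_k, c_l) \geq 2^{-k} + 2^{-l}$ then yields $d(c_k, c_l) \leq 2^{-k} + 2^{-l}$, so $(c_k)$ is Cauchy and converges to some $x \in X$ by completeness.

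It then remains to verify $\hat{x} = \phi$, that is $(x \in U) \liff (\phi(U) = \top)$ for all $U \in \mathcal{T}$. Writing $U$ as a countable union of basic balls and using that both $\phi$ and membership turn such unions into existentials, this reduces to the two implications for a single ball $B(a,q)$. For the forward implication, if $x \in B(a,q)$ then choosing $k$ large enough that $d(x, c_k) + 2^{-k} < q - d(a,x)$ makes $B_k \subseteq B(a,q)$, so $\top = \phi(B_k) \leq \phi(B(a,q))$ by monotonicity. For the converse, the decomposition $B(a,q) = \bigcup_{q' < q} B(a,q')$ over rationals $q' < q$ lets me replace $B(a,q)$ by some $B(a,q')$ with $\phi(B(a,q')) = \top$; the meet argument above then gives $d(a, c_k) \leq q' + 2^{-k}$ for every $k$, and passing to the limit yields $d(a,x) \leq q' < q$, so $x \in B(a,q)$.

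The main obstacle I anticipate is the constructive mismatch between open and closed balls inside the surjectivity argument: limits of accepted balls naturally land in closed balls, and a $\phi$-accepted intersection is only \emph{not empty} rather than inhabited, so no witness point is available. I expect to resolve this uniformly by routing the distance estimates through $\leq$, which is $\neg\neg$-stable for reals, and by the refinement $B(a,q) = \bigcup_{q' < q} B(a,q')$, which buys exactly the strict inequality needed to land inside the open ball. I would also keep track that Countable Choice is invoked only once, to select the sequence of accepted balls.
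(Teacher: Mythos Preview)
Your argument is correct. The main difference from the paper's proof is in how the Cauchy sequence of centers is produced. The paper invokes \emph{Dependent Choice} to build a nested chain $B_{i_{k+1},k+1} \subseteq B_{i_k,k}$ of accepted basic balls and takes the centers; you instead use a single application of \emph{Countable Choice} to pick, for each~$k$, any accepted ball $B_k = B(c_k,2^{-k})$ with no nesting, and then recover the Cauchy estimate $d(c_k,c_l) \leq 2^{-k}+2^{-l}$ from the meet-preservation of~$\phi$ together with $\lnot\lnot$-stability of~$\leq$ on reals. Given the paper's stated policy of minimizing Dependent Choice, your route is a genuine improvement on this point. The verification that $\hat{x} = \phi$ is essentially the same in both proofs (shrink the radius via $B(a,q) = \bigcup_{q'<q} B(a,q')$ and use that accepted intersections are nonempty); you additionally spell out injectivity of $\hat{\Box}$, which the paper leaves implicit.
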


\begin{proof}
  Suppose $(M, d)$ is a complete metric space with a dense sequence $(a_i)_{i \in \NN}$. The metric topology has a countable base of open balls of the form $B_{i,j} = B(a_i, 2^{-j})$, where $i, j \in \NN$.
  Consider an arbitrary $\sigma$-homomorphism $p : \topol{B} \to \Sigma$. Because every ball in~$M$ can be written as a countable union of basic balls, Dependent Choice yields a sequence $(i_k)_{k \in \NN}$ such that $B_{i_{k+1}, k+1} \subseteq B_{i_k, k}$ and $p (B_{i_k,k}) = \top$, for all $k \in \NN$.
  Also note that the sequence $(a_{i_k})_k$ is Cauchy, so it converges to a point $x = \lim_k a_{i_k}$ which is contained in all $B_{i_k, k}$'s.

  We claim that $p = \hat{x}$. To show this, consider any open ball $B(y, r)$. If $x \in B(y, r)$ then there is~$k$ such that $x \in B_{i_k, k} \subseteq B(y, r)$, hence $p(B(y, r)) \geq p(B_{i_k,k}) = \top$.
  For the other direction, suppose $p(B(y, r)) = \top$. There is $0 < q < r$ such that
  $p(B(y,q)) = \top$. Then, for any $k \in \NN$,
  \begin{equation*}
    p(B(y,q) \cap B_{i_k, k}) =
    p(B(y,q)) \land p(B_{i_k, k}) = \top,
  \end{equation*}
  whence $B(y,q)$ and $B_{i_k, k}$ cannot be disjoint, from which $d(y, a_{i_k}) \leq q + 2^{-k}$ follows. For a large enough~$k$ we obtain $d(y, a_{i_k}) \leq q + 2^{-k} < r$, and so $x \in B(y, r)$, which implies $\hat{x}(B(y,r)) = \top$, as desired.
\end{proof}

Completeness cannot be dropped from the previous proposition, in contrast to classical mathematics where all Hausdorff spaces are sober.

\begin{proposition}
  \label{prop:lpo-if-all-separable-metric-sober}
  If every separable metric space is sober then the Limited Principle of Omniscience (LPO) holds.
\end{proposition}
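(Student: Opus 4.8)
The plan is to derive LPO from an arbitrary binary sequence $\alpha : \NN \to \two$ by manufacturing from it a separable metric space $M_\alpha$ whose assumed sobriety decides the instance $(\all{n} \alpha(n) = 0) \lor (\some{n} \alpha(n) = 1)$. The guiding observation is that in \cref{prop:csm-sober} completeness was used for exactly one purpose: to supply the limit of the Cauchy sequence of ball centres extracted from a $\sigma$-homomorphism $p : \topol{B} \to \Sigma$. So I would build a space in which that limit cannot be produced by hand unless one can already decide $\alpha$, and then let the hypothesised sobriety produce it, thereby doing the deciding for us.

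Concretely, I would start from the generic convergent sequence, taking the dense points $a_n = 2^{-n}$ and the candidate limit $0$, so that the basic balls are $B(2^{-i}, 2^{-j})$ and the neighbourhood filter ``at $0$'' can be described in $\Sigma$-valued form purely from the centres, with no reference to whether $0$ itself is a point. Using $\alpha$ I would perturb this configuration so that the formal point it determines is realised by a point lying in one of two separated (clopen) regions of $M_\alpha$, according to whether $\alpha$ is everywhere $0$ or eventually fires. Feeding this $\sigma$-homomorphism to the inverse of the neighbourhood filter --- which exists because $M_\alpha$ is assumed sober --- yields an honest point $x \in M_\alpha$; since any point of a metric space can be approximated by basic centres to any fixed precision, I can locate $x$ well enough to decide which region it inhabits, and hence decide the instance of LPO. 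As $\alpha$ is arbitrary, LPO follows.

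The main obstacle, and indeed the whole constructive content, is the tension between two demands on the formal point. For sobriety even to apply, $p : \topol{B} \to \Sigma$ must be a genuine $\Sigma$-valued $\sigma$-homomorphism, uniformly definable from $\alpha$; yet the naive way of forcing its realiser to the ``$0$'' end records the event $\all{n} \alpha(n) = 0$, which is only $\Pi^0_1$ and so not semidecidable. Arranging the perturbation so that every neighbourhood condition defining $p$ stays semidecidable while the realiser's location still pins down the verdict is the delicate step --- precisely the gap that completeness closed in \cref{prop:csm-sober}. Once $p$ is seen to be a bona fide $\sigma$-homomorphism, the rest (separability of $M_\alpha$, approximating the realiser, and reading off the decidable verdict) is routine.
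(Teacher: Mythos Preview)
Your outline names the obstacle correctly but does not resolve it, so as written it is not a proof: the ``delicate step'' of producing a $\Sigma$-valued $\sigma$-homomorphism $p$ on $\topol{B_{M_\alpha}}$ whose realiser lands in one of two decidable regions is precisely the content of the proposition, and you leave it open. Until you exhibit $M_\alpha$ and $p$ and verify that every value $p(U)$ is semidecidable, there is nothing to feed to the sobriety hypothesis.

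The paper's argument sidesteps this obstacle by an idea you should absorb, because it dissolves the $\Pi^0_1$ difficulty rather than confronting it. Do not let the space vary with $\alpha$; take the \emph{fixed} separable metric space
\[
  M \;=\; \set{\beta \in \two^\NN \such \beta = o \;\lor\; \beta \apart o}
\]
sitting densely inside the complete Cantor space $\two^\NN$. Density makes the restriction $i^{*} : \topol{B} \to \topol{B'}$ of metric topologies an injection with a left inverse $r$. Now the given $\alpha \in \two^\NN$ is an honest point of the \emph{complete} ambient space, so its neighbourhood filter $\hat{\alpha} : \topol{B} \to \Sigma$ is already a $\sigma$-homomorphism with no semidecidability worries, and $\hat{\alpha} \circ r$ is the desired $\sigma$-homomorphism on $\topol{B'}$. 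Sobriety of $M$ produces $\beta \in M$ with $\hat{\beta} = \hat{\alpha} \circ r$, and composing with $i^{*}$ shows $\alpha = \beta$, hence $\alpha = o$ or $\alpha \apart o$.

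The structural difference from your plan is: the paper encodes $\alpha$ in the \emph{homomorphism} (where it comes for free from the completion) rather than in the \emph{space} (where building it forces you to decide $\alpha$ up front, or else to manufacture a $\sigma$-homomorphism by hand, which is exactly the hurdle you flagged).
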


\begin{proof}
  The limited principle of omniscience states that every $\alpha \in \two^\NN$ is either equal to the zero sequence $o$, or is apart from it. Recall that the apartness relation $\alpha \apart \beta$ is defined as $\some{n \in \NN} \alpha(n) \neq \beta(n)$.

  Let $d$ be the complete ultrametric on the Cantor space $\two^\NN$ defined by
  \begin{equation}
    \label{eq:cantor-metric}%
    d(\alpha, \beta) = \lim\nolimits_n 2^{-\min \set{k \such k = n \lor \alpha_k \neq \beta_k}}.
  \end{equation}
  We write $\overline{\alpha}(n) = [\alpha(0), \ldots, \alpha(n-1)]$ for the initial segment of $\alpha$ of length~$n$. A countable basis for the metric topology on $\two^\NN$ consists of open balls, one for each finite sequence $a \in 2^{*}$ of length $|a|$,
  \begin{equation*}
    B_a = \set{\alpha \in \two^\NN \such \overline{\alpha}(|a|) = a}.
  \end{equation*}
  Define the metric space
  $M = \set{ \alpha \in \two^\NN \such \alpha = o \lor \alpha \apart o }$,
  with the metric induced by~$\two^\NN$. The open balls $B'_a = B_a \cap M$ form a countable base for the metric topology on $M$. Because $M$ is dense in $\two^\NN$ we may define a $\sigma$-homomorphism $r : \topol{B'} \to \topol{B}$ by
  \begin{equation*}
    \textstyle
    r \left( \bigvee_{i \in \NN} B'_{a_i} \right) =
    \bigvee_{i \in \NN} B_{a_i}.
  \end{equation*}
  In fact, $r$ is the left inverse of the $\sigma$-homomorphism $i^{*} : \topol{B} \to \topol{B'}$ where $i : M \to \two^\NN$ is the canonical inclusion.

  Consider any $\alpha \in \two^\NN$. By assumption $M$ is sober, therefore there exists $\beta \in M$ such that $\hat{\beta} = \hat{\alpha} \circ r$. But then $\hat{\alpha} = \hat{\alpha} \circ r \circ i^{*} = \hat{\beta} \circ i^{*}$, which means that $\alpha = \beta$, so $\alpha = o$ or $\alpha \apart o$.
\end{proof}

For our second example of sober spaces we venture into domain theory. Dieter Spreen considered general spaces, such as $A$-spaces, $f$-spaces, and other domain-like spaces. We shall be content with just one kind of
domains, namely the $\omega$-algebraic chain-complete partial orders. We review here just enough domain theory to establish sobriety of such spaces, and direct the interested readers to~\cite{bauer17} for a further study of domain theory in the context of synthetic computability.

A \defemph{partially ordered set}, or \defemph{poset}, $(P, {\leq})$ is a set~$P$ with a reflexive, transitive and asymmetric relation~$\leq$.
A \defemph{chain} in $(P, {\leq})$ is a a monotone sequence $c : \NN \to P$: for all $i \in \NN$, $c_i \leq c_{i+1}$.
A \defemph{chain-complete poset ({\wCPO})} is a poset $(P, {\leq})$ in which every chain $c : \NN \to P$ has a supremum $\Sup{n} c_n$.
%
% A map between {\wCPOs} is \defemph{$\omega$-continuous} when it is monotone and it preserves suprema of chains.

Recall that $S \subseteq P$ is \defemph{directed} if it is inhabited and for all $x, y \in S$ there is $z \in S$ such that $x \leq z$ and $y \leq z$. The following lemma shows that in the presence of Countable Choice no generality is gained by generalizing chains to countable directed-complete sets.

\begin{lemma}
  \label{lemma:dcpo-from-wcpo}
  A countable directed subset of an {\wCPO} has a supremum.
\end{lemma}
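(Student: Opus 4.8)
The plan is to take a countable directed set $S \subseteq P$ and manufacture from it a chain with the same supremum, so that chain-completeness of $P$ supplies the supremum of $S$. The key idea is that directedness lets us ``close up'' any finite collection of elements under an upper bound, and countability lets us organize this into a monotone sequence.

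First I would dispose of the trivial case: if $S$ is empty the statement is vacuous, so by the definition of directedness we may assume $S$ is inhabited, say by some $s_0 \in S$. Next, since $S$ is countable, fix an enumeration $e : \NN \to \one + S$ of it. I would then define a chain $c : \NN \to S$ recursively as follows: set $c_0 = s_0$, and given $c_n$, look at $e(n)$. If $e(n) = \star$ put $c_{n+1} = c_n$; if $e(n)$ is some element $t \in S$, use directedness to choose $z \in S$ with $c_n \leq z$ and $t \leq z$, and set $c_{n+1} = z$. By construction $c$ is monotone, hence a chain in the sense of the paper, and so it has a supremum $\Sup{n} c_n$ in $P$.

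It then remains to verify that $\Sup{n} c_n$ is in fact the supremum of $S$. Every $c_n$ lies in $S$, so $\Sup{n} c_n$ is an upper bound for the $c_n$'s; conversely every element of $S$ appears as some $e(n) = t$ and satisfies $t \leq c_{n+1} \leq \Sup{m} c_m$, so $\Sup{n} c_n$ is an upper bound for $S$. Finally, any upper bound of $S$ bounds all the $c_n$ (since each $c_n \in S$) and hence bounds their supremum, so $\Sup{n} c_n$ is the least upper bound of $S$, as required.

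The main obstacle is the recursive choice of the witnesses $z$: at each step directedness only asserts the \emph{existence} of an upper bound, not a canonical one, so selecting the sequence $(c_n)$ invokes a choice principle. The definition is a recursion in which the choice at stage $n+1$ depends on $c_n$, so the natural tool is Dependent Choice. However, since the paper emphasizes minimizing uses of Dependent Choice in favor of Countable Choice, I would try to arrange matters so that the choices can be made independently. Concretely, I expect one can pre-select, using Countable Choice over the countably many \emph{pairs} of enumerated elements, an upper bound $u(i,j) \in S$ for each pair $(e(i), e(j))$ whenever both are defined, and then build the chain by taking iterated upper bounds of the finitely many elements enumerated so far; this replaces the sequential dependency by a single application of Countable Choice. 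Checking that this alternative construction still yields a genuine chain with the correct supremum is the part that requires care.
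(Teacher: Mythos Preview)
Your proposal is correct, and the refinement you sketch in the final paragraph is exactly the route the paper takes. The paper bypasses the Dependent Choice version entirely: since a directed set is inhabited it enumerates $S$ without skipping via $e : \NN \to S$, applies Countable Choice once to obtain a map $s : \NN \times \NN \to \NN$ with $e_m, e_n \leq e_{s(m,n)}$, and then defines $c_0 = 0$, $c_{n+1} = s(c_n, n)$; the chain $e \circ c$ is cofinal in~$S$. The point you anticipated---that once the upper-bound selector is fixed the recursion becomes deterministic and requires no further choice---is precisely what makes this work with only Countable Choice.
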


\begin{proof}
  Suppose $S \subseteq P$ is a directed subset of an {\wCPO} $(P, \leq)$, enumerated by $e : \NN \to S$.
  By Countable Choice there is a map $s : \NN \times \NN \to \NN$ such that $e_m \leq e_{s(m,n)}$ and $e_n \leq e_{s(m,n)}$ for all $m, n \in \NN$. Define $c : \NN \to \NN$ by
  $c_0 = 0$ and
  $c_{n+1} = s(c_n, n)$.
  Then $e \circ c$ is a cofinal chain in~$S$, so its supremum is the supremum of~$S$.
\end{proof}

We say that a semidecidable subset $U \subseteq P$ is \defemph{Scott-open} when it is
\begin{enumerate}
\item upward closed: if $x \in U$ and $x \leq y$ then $y \in U$, and
\item inaccessible by suprema of chains: if $c : \NN \to P$ is a chain and $\Sup{n} c_n \in U$ then $c_m \in U$ for some $m \in \NN$.
\end{enumerate}
The Scott-opens of an {\wCPO} $(P, {\leq})$ form a $\sigma$-frame $\ScottOpen[P]$, as is easily checked.

In a poset $(P, {\leq})$, the \defemph{way-below relation} $x \wayb y$ is defined to mean: if $c : \NN \to P$ is a chain such that $y \leq \Sup{n} c_n$ then there is $m \in \NN$ such that $x \leq c_m$. An element $x \in P$ is \defemph{compact} when $x \wayb x$.
We let $\Compact{P}$ denote the set of all compact elements of~$P$. It is closed under binary joins.

An {\wCPO} $(P, {\leq})$ is \defemph{$\omega$-algebraic} when
\begin{enumerate}
\item $\Compact{P}$ is countable and the induced order decidable, and
\item for every $x \in P$, the set $\set{y \in \Compact{P} \such y \leq x}$ is countable and~$x$ is its supremum.
\end{enumerate}
When these conditions are met, every $x \in P$ is the supremum of a chain of compact elements, because $\set{y \in \Compact{P} \such y \leq x}$ contains a cofinal chain, as was shown in \cref{lemma:dcpo-from-wcpo}. Furthermore, for $x \in \Compact{P}$ and $y \in P$ the relation $x \leq y$ is semidecidable: there is a chain $c : \NN \to \Compact{P}$ whose supremum is~$y$, hence $x \leq y$ is equivalent to $\some{n \in \NN} x \leq c_n$, and $x \leq c_n$ is decidable.

The upshot of these definitions is that the Scott topology of an $\omega$-algebraic {\wCPO} $P$ is countably based, with the basic opens of the form $\upper x$ for $x \in \Compact{P}$. We just argued that they $\upper x$ is semidecidable, and it is Scott-open because~$x$ is compact. To see that we really have a base, we verify that an Scott-open $U \subseteq P$ is a countable union of basic opens
\begin{equation*}
  \textstyle
  U = \bigcup \set{\upper x \such x \in \Compact{P} \land x \in U}.
\end{equation*}
Clearly, this is a countable union because $\Compact{P}$ is cuntable and $x \in U$ semidecidable.
The inclusion from right to left holds because~$U$ is an upper set.
For the opposite inclusion, consider any $y \in U$. Because~$y$ is the supremum of compact elements below it, and $U$ is inaccessible by suprema of chains, there is $x \in \Compact{P}$ such that $x \leq y$ and $x \in U$, therefore $y \in \upper{x}$ and $y$ is an element of the right-hand side.

\begin{proposition}
  The Scott topology of an $\omega$-algebraic {\wCPO} is sober.
\end{proposition}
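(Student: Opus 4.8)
The plan is to show that the neighborhood filter $\hat{\Box} : P \to \Hom{\ScottOpen[P], \Sigma}$ is both injective and surjective, exploiting throughout that $\ScottOpen[P]$ carries the countable base $(\upper c)_{c \in \Compact{P}}$ and that a $\sigma$-homomorphism out of $\ScottOpen[P]$ is determined by its values on this base. Injectivity is the easy half: if $\hat{x} = \hat{y}$, then for every compact $c$ we have $c \leq x \iff \hat{x}(\upper c) = \top \iff \hat{y}(\upper c) = \top \iff c \leq y$, so $\set{c \in \Compact{P} \such c \leq x} = \set{c \in \Compact{P} \such c \leq y}$; since $\omega$-algebraicity exhibits each of $x$ and $y$ as the supremum of its own such set, $x = y$.

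The substance is surjectivity. Given a $\sigma$-homomorphism $p : \ScottOpen[P] \to \Sigma$, I would form $F = \set{c \in \Compact{P} \such p(\upper c) = \top}$, a semidecidable and hence countable subset of the countable set $\Compact{P}$, and take $x$ to be its supremum. First, $F$ is inhabited: $p$ preserves finite meets, in particular the empty one, so it sends the top open $P = \bigcup_{c \in \Compact{P}} \upper c$ to $\top$, and since $p$ preserves countable joins this forces $p(\upper c) = \top$ for some $c$. Second, $F$ is directed: if $c, c' \in F$ then $p(\upper c \cap \upper{c'}) = p(\upper c) \land p(\upper{c'}) = \top$, and writing the Scott-open $\upper c \cap \upper{c'}$ as a countable union of basic opens $\upper d$ and applying preservation of joins again yields a compact $d \geq c, c'$ with $d \in F$. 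By \cref{lemma:dcpo-from-wcpo} the countable directed set $F$ then has a supremum $x = \bigvee F$, realized as the supremum of a cofinal chain $(g_n)_{n \in \NN}$ drawn from $F$.

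It remains to check $\hat{x} = p$, and since both are $\sigma$-homomorphisms it suffices to verify $c \leq x \iff p(\upper c) = \top$ for every compact $c$. The direction from right to left is immediate, for $p(\upper c) = \top$ places $c$ in $F$ and hence below $x = \bigvee F$. For the converse I would invoke compactness of $c$: from $c \leq x = \bigvee_n g_n$ I obtain $c \leq g_m$ for some $m$, so $\upper{g_m} \subseteq \upper c$, and monotonicity of $p$ (which follows from its preservation of meets) gives $p(\upper c) \geq p(\upper{g_m}) = \top$.

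I expect the delicate points to be purely organizational: establishing directedness of $F$ without presupposing that two compact elements have a compact join — the clean route being to expand $\upper c \cap \upper{c'}$ into a countable union of basic opens — and the disciplined use of \cref{lemma:dcpo-from-wcpo} to supply both the supremum $x$ and the cofinal chain feeding the compactness step, so that the whole argument stays within Countable Choice.
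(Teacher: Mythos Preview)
Your proof is correct and follows essentially the same route as the paper: form the set of compact elements whose basic open is sent to $\top$, take its supremum, and verify agreement on the basic opens. The one tactical difference is that for directedness the paper simply invokes the fact (stated earlier in the text) that $\Compact{P}$ is closed under binary joins, so $\upper c \cap \upper{c'} = \upper(c \lor c')$, whereas you expand $\upper c \cap \upper{c'}$ as a countable union of basic opens and extract a compact upper bound via preservation of joins; your argument is slightly more self-contained in that it does not rely on that closure property. You also make injectivity and inhabitation of $F$ explicit, which the paper leaves implicit.
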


\begin{proof}
  Let $(P, {\leq})$ be an $\omega$-algebraic {\wCPO}, and consider an arbitrary $\sigma$-homomorphism $p : \ScottOpen[P] \to \Sigma$.
  The set
  \begin{equation*}
    S = \set{x \in \Compact{P} \such p(x) = \top}
  \end{equation*}
  is countable and directed, for if $p(x) = \top$ and $p(y) = \top$ for $x, y \in \Compact{P}$ then
  \begin{equation*}
    p(\upper (x \lor y)) =
    p(\upper x \cap \upper y) =
    p(\upper x) \land p(\upper y) =
    \top \land \top =
    \top.
  \end{equation*}
  We claim that $p = \hat{z}$ where $z = \bigvee S$.
  It suffices to check that $p$ and $\hat{z}$ agree on the basic opens.
  For any $x \in \Compact{P}$, if $\hat{z}(\upper x)$ then $x \leq z = \bigvee S$, hence $x \leq y$ for some $y \in S$,
  from which we get $\top = p(\upper y) \leq p(\upper x)$. Conversely, if $p(\upper x) = \top$ then $x \in S$, hence $x \leq \bigvee S = z$ and $\hat{z}(\upper x) = \top$.
\end{proof}

The poset $(\Sigma^\NN, {\subseteq})$ is an $\omega$-algebraic {\wCPO} whose compact elements are the finite subset of~$\NN$.
This is in fact a universal countably based $T_0$-space.

\begin{definition}
  A space is a \defemph{$T_0$-space} when its (basic) neighborhood filter is injective.
\end{definition}

\begin{proposition}
  \label{prop:sigma-N-universal}%
  Every countably-based $T_0$-space embeds in $\Sigma^\NN$ equipped with the Scott topology.
\end{proposition}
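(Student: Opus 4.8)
The plan is to take the basic neighborhood filter $\nbhmap[X] : X \to \Sigma^\NN$ itself as the embedding. Recall that $(\Sigma^\NN, {\subseteq})$ is an $\omega$-algebraic {\wCPO} whose compact elements are the finite subsets of $\NN$, so by the description of the Scott topology of an $\omega$-algebraic {\wCPO} its basic opens are the sets $\upset{F} = \set{S \in \Sigma^\NN \such F \subseteq S}$ for $F$ a finite subset of $\NN$. Injectivity of $\nbhmap[X]$ is precisely the $T_0$ condition, so it remains to check that $\nbhmap[X]$ is continuous and that the topology of $X$ is recovered as the collection of preimages of Scott-opens.

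First I would compute the preimage of a basic Scott-open. For a finite $F \subseteq \NN$ we have $\nbh[X]{x} \in \upset{F}$ iff $F \subseteq \nbh[X]{x}$ iff $x \in \bigcap_{i \in F} B_i$, so $\inv{(\nbhmap[X])}(\upset{F}) = \bigcap_{i \in F} B_i$, a finite intersection of basic opens and hence $\topol{B}$-open. Since every Scott-open of $\Sigma^\NN$ is a countable union of such basic opens, its preimage is a countable union of $\topol{B}$-opens, again open; thus $\nbhmap[X]$ is continuous.

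Next I would show that every open of $X$ arises as such a preimage, which is what makes $\nbhmap[X]$ a homeomorphism onto its image (equivalently, carry the subspace topology). The key observation is that each basic open is already a preimage of a basic Scott-open: taking $F = \set{i}$ above gives $\inv{(\nbhmap[X])}(\upset{\set{i}}) = B_i$. Hence an arbitrary open $\bigcup_n B_{i_n}$ of $X$ is the preimage of the Scott-open $\bigcup_n \upset{\set{i_n}}$, while conversely every preimage of a Scott-open is $\topol{B}$-open by the previous paragraph. Therefore $\topol{B}$ coincides with the family of preimages of Scott-opens, which together with injectivity shows that $\nbhmap[X]$ is an embedding.

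I do not expect a serious obstacle; the argument is a direct translation of the classical universal property of $\Sigma^\NN$-like spaces. The only points requiring care are to use the correct description of the basic Scott-opens of $\Sigma^\NN$ (the sets $\upset{F}$ for finite $F$, supplied by $\omega$-algebraicity), and to verify not merely continuity but also that $\nbhmap[X]$ \emph{reflects} opens, so that the topology on $X$ is exactly the subspace topology rather than a possibly finer one.
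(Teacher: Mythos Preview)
Your proposal is correct and follows essentially the same route as the paper: the embedding is the basic neighborhood filter $\nbhmap[X]$, injectivity is the $T_0$ condition, continuity comes from $\inv{(\nbhmap[X])}(\upset{F}) = \bigcap_{i \in F} B_i$, and surjectivity of $\inv{(\nbhmap[X])}$ onto $\topol{B}$ is witnessed by $\inv{(\nbhmap[X])}\bigl(\bigcup_{i \in I} \upset{\set{i}}\bigr) = \bigcup_{i \in I} B_i$ for countable~$I$. The paper's proof is organized identically, only phrasing the last step as ``$\inv{\nbhmap}$ surjective'' rather than ``$\nbhmap[X]$ reflects opens''.
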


\begin{proof}
    The embedding of a countably based $T_0$-space $(X, \topol{B})$ into $\Sigma^\NN$ is the basic neighborhood filter $\nbhmap : X \to \Sigma^\NN$. It is injective because $X$ is a $T_0$-space, and is continuous because the inverse image of $\upper \set{n_1, \ldots, n_k}$ is $B_{n_1} \cap \cdots \cap B_{n_k}$. To see that $\inv{\nbhmap}$ is surjective, and therefore $\nbhmap$ a topological embedding, consider $U \in \topol{B}$ and write it as a union $U = \bigcup_{n \in I} B_n$ where $I \subseteq \NN$ is countable. Then
    \begin{equation*}
      \textstyle
      \nbh{x} \in \bigcup_{n \in I} \upper\set{n}
      \iff
      \some{n \in I} x \in B_n
      \iff
      x \in U,
    \end{equation*}
    therefore $\inv{\nbhmap}(\bigcup_{n \in I} \upper\set{n}) = U$.
\end{proof}

\subsection{An equational characterization of $\sigma$-frame homomorphism}
\label{sec:an-equat-char}

Given a $\sigma$-frame with a countable base, how can we tell whether a map of basic elements into another $\sigma$-frame is induced by a $\sigma$-homomorphism? The following answer shows that the criterion can be expressed equationally.

\begin{lemma}
  \label{lemma:when-induced-by-homomorphism}
  Let $L$ be a $\sigma$-frame with a countable base $(b_n)_{n \in \NN}$ and $\phi : L \to M$ a $\sigma$-homomorphism. Then the map $f : \NN \to M$ defined by $f(n) = \phi(b_n)$ satisfies the following conditions:
  \begin{enumerate}
  \item for all $T, T' \in \Sigma^\NN$, if $\bigvee_{k \in T} b_k = \bigvee_{m \in T'}
    b_m$ then $\bigvee_{k \in T} f(k) = \bigvee_{m \in T'} f(m)$,
  \item $\bigvee_{n \in \NN} f(n) = \top$, and
  \item for all $T, T', T'' \in \Sigma^\NN$, if
    \begin{equation*}
      \textstyle
      \big(\bigvee_{k \in T} b_k\big)
      \land
      \big(\bigvee_{m \in T'} b_m\big) =
      \bigvee_{n \in T''} b_n
    \end{equation*}
    then
    \begin{equation*}
      \textstyle
      \big(\bigvee_{k \in T} f(k)\big)
      \land
      \big(\bigvee_{m \in T'} f(m)\big) =
      \bigvee_{n \in T''} f(n).
    \end{equation*}
  \end{enumerate}
  Conversely, any map $f : \NN \to M$ satisfying these conditions is so induced by a unique $\sigma$-homomorphism $\phi : L \to \Sigma$, characterized by
  \begin{equation*}
    \textstyle
    \phi\big(\bigvee_{n \in T} b_n\big) = \bigvee_{n \in T} f(n).
  \end{equation*}
\end{lemma}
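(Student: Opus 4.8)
The plan is to treat the two directions separately: the forward direction is a routine unwinding of the homomorphism axioms, while the converse rests almost entirely on a single well-definedness argument powered by condition~(1).

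First I would dispatch the forward direction. Since $\phi$ preserves countable joins, applying it to both sides of an identity $\bigvee_{k \in T} b_k = \bigvee_{m \in T'} b_m$ and using $\phi(b_n) = f(n)$ yields condition~(1) immediately; this condition is nothing but the reflection of the well-definedness that the converse will need. For condition~(2) I would first note that, because $(b_n)_{n \in \NN}$ is a base, the top element is a countable join of basic elements, whence $\bigvee_{n \in \NN} b_n = \top$ in $L$; applying $\phi$ and using that it preserves the empty meet $\top$ gives $\bigvee_{n \in \NN} f(n) = \phi(\top) = \top$. Condition~(3) follows by applying $\phi$ to the stated identity and using that it preserves binary meets as well as countable joins.

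The converse is the substance of the lemma (here the codomain of the induced map should read $M$, the $\sigma$-frame containing the values $f(n)$, rather than $\Sigma$). The idea is to define $\phi$ by the prescribed formula $\phi\big(\bigvee_{n \in T} b_n\big) = \bigvee_{n \in T} f(n)$. The base property guarantees that every $x \in L$ admits at least one representation $x = \bigvee_{n \in T} b_n$ with $T \in \Sigma^\NN$, and condition~(1) guarantees that $\bigvee_{n \in T} f(n)$ is the same for any two representations of the same $x$. Thus the relation $\set{(x, y) \such \some{T \in \Sigma^\NN} \big(x = \bigvee_{n \in T} b_n \land y = \bigvee_{n \in T} f(n)\big)}$ is both total and single-valued, hence is the graph of a genuine function $\phi : L \to M$ — no choice is needed to define it, only the principle that a total functional relation is a function. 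I expect this well-definedness to be the main, indeed essentially the only, conceptual obstacle; everything afterwards is bookkeeping.

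It then remains to check that $\phi$ is a $\sigma$-homomorphism and is unique. Taking $T = \set{n}$ gives $\phi(b_n) = f(n)$. For countable joins, given a countable family $(x_i)_i$ with representations $x_i = \bigvee_{n \in T_i} b_n$ chosen by Countable Choice, the set $T = \bigcup_i T_i$ again lies in $\Sigma^\NN$ and represents $\bigvee_i x_i$, so $\phi\big(\bigvee_i x_i\big) = \bigvee_{n \in T} f(n) = \bigvee_i \bigvee_{n \in T_i} f(n) = \bigvee_i \phi(x_i)$. Preservation of $\top$ is condition~(2) together with $\bigvee_{n \in \NN} b_n = \top$. For binary meets I would write $x = \bigvee_{k \in T} b_k$ and $x' = \bigvee_{m \in T'} b_m$, use the $\sigma$-frame structure of $L$ to express $x \land x' = \bigvee_{n \in T''} b_n$, and then condition~(3) delivers $\phi(x \land x') = \phi(x) \land \phi(x')$ at once. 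Finally, uniqueness is immediate: any $\sigma$-homomorphism agreeing with $f$ on the base must, by preservation of countable joins, agree with $\phi$ on every $\bigvee_{n \in T} b_n$, hence on all of $L$.
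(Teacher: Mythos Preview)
Your proof is correct and follows essentially the same route as the paper's: condition~(1) for well-definedness, conditions~(2) and~(3) for finite meets, and a direct check for countable joins. The paper's proof is a three-line sketch of exactly this argument, so your version simply unpacks what the paper leaves implicit (including the use of Countable Choice to select representations when verifying preservation of countable joins, and the typo in the codomain).
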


\begin{proof}
  It is easy to check that $f$ really satisfies the stated conditions. Conversely, suppose~$f$ satisfies the conditions. Then the map $\phi$ is well defined by the first condition and the fact that~$L$ is generated by the base. The first condition also guarantees that $\phi$ commutes with countable joins, and the remaining two that it commutes with finite meets.
\end{proof}

\begin{corollary}
  \label{cor:sober-image-equalizer}%
  For a countably based sober space $(X, \topol{B})$, the basic neighborhood filter is the equalizer of two maps whose common codomain is a power of~$\Sigma$.
\end{corollary}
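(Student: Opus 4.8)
The plan is to read the defining equations of the equalizer directly off the equational characterization in \cref{lemma:when-induced-by-homomorphism}, and to use sobriety to identify the points of $X$ with its solutions. First I would record that the basic neighborhood filter factors through the neighborhood filter as $\nbh{x} = \hat{x} \circ B$, i.e.\ $\nbhmap$ sends $x$ to the map $n \mapsto \hat{x}(B_n)$. Since $X$ is sober, $\hat{\Box} : X \to \Hom{\topol{B}, \Sigma}$ is a bijection, and by \cref{lemma:when-induced-by-homomorphism} the assignment $\phi \mapsto \phi \circ B$ is an injection of $\Hom{\topol{B}, \Sigma}$ into $\Sigma^\NN$ whose image is \emph{precisely} the set of $f : \NN \to \Sigma$ satisfying conditions (1)--(3) of that lemma. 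Composing the two, $\nbhmap$ is an injection of $X$ onto exactly that subset of $\Sigma^\NN$, so it remains only to present the subset as an equalizer.

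Next I would turn each of the three conditions into an equation between two maps out of $\Sigma^\NN$. For condition (1), set $J_1 = \set{(T,T') \in \Sigma^\NN \times \Sigma^\NN \such \Sup{k \in T} B_k = \Sup{m \in T'} B_m}$ and define $g_1, h_1 : \Sigma^\NN \to \Sigma^{J_1}$ by $g_1(f)(T,T') = \Sup{k \in T} f(k)$ and $h_1(f)(T,T') = \Sup{m \in T'} f(m)$; these are well defined because each such join is a countable join in $\Sigma$, hence semidecidable, using that $\NN$ is overt. Condition (1) for $f$ is then exactly $g_1(f) = h_1(f)$. Condition (2) is the single equation $\Sup{n \in \NN} f(n) = \top$, and condition (3) is handled just like (1), indexing over the triples $(T,T',T'')$ for which $\big(\Sup{k \in T} B_k\big) \land \big(\Sup{m \in T'} B_m\big) = \Sup{n \in T''} B_n$ and comparing $\big(\Sup{k \in T} f(k)\big) \land \big(\Sup{m \in T'} f(m)\big)$ with $\Sup{n \in T''} f(n)$. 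Taking $J$ to be the disjoint union of the three index sets and bundling the three pairs into a single pair $g, h : \Sigma^\NN \to \Sigma^J$, the equalizer of $g$ and $h$ is exactly the set of $f$ meeting conditions (1)--(3), and its codomain $\Sigma^J$ is a power of $\Sigma$, as required.

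Combining the two halves, $\nbhmap$ is a bijection of $X$ onto the equalizer of $g$ and $h$; since it plainly equalizes them (each $\nbh{x}$ is induced by the $\sigma$-homomorphism $\hat{x}$, so it satisfies (1)--(3)), it is the equalizer. The only points needing care are the well-definedness of the comparison maps --- that the displayed joins genuinely land in $\Sigma$ --- and the bookkeeping that amalgamates the three families into one power of $\Sigma$. Neither is a real obstacle, so the proof is in essence a repackaging of \cref{lemma:when-induced-by-homomorphism} under the sobriety hypothesis.
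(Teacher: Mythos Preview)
Your proposal is correct and follows essentially the same route as the paper: both invoke \cref{lemma:when-induced-by-homomorphism} to identify the image of $\nbhmap[X]$ with the maps $f : \NN \to \Sigma$ satisfying conditions (1)--(3), and then package those conditions as an equalizer with codomain a power of~$\Sigma$. The only cosmetic difference is that the paper introduces an auxiliary pairing $K(S,T) = (\some{n} n \in S \land n \in T)$ and leaves the encoding of the implications as ``evident'', whereas you spell out the index sets $J_1, \ldots$ by restricting to tuples satisfying the hypotheses; these amount to the same construction.
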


\begin{proof}
  The corollary claims that $\nbhmap[X]$ is an equalizer of the form
  \begin{equation*}
    \xymatrix@+2em{
      {X}
      \ar[r]^{\nbhmap[X]}
      &
      {\Sigma^\NN}
      \ar@<0.25em>[r]
      \ar@<-0.25em>[r]
      &
      \Sigma^{I}
    }
  \end{equation*}
  In plain terms, this amounts to $\nbhmap[X]$ being injective and its image being the solution set of a system of equations between semidecidable truth values. Injectivity holds because $B$ generates $\topol{B}$ and by sobriety points are uniquely determined by their neighborhood filters. The system of equations is essentially the one from  \cref{lemma:when-induced-by-homomorphism} adapted to capture~$\nbhmap[X]$. Specifically, define the maps $J : \Sigma^\NN \to \mathcal{T}$ and $K : \Sigma^\NN \times \Sigma^\NN \to \Sigma$ by
  \begin{equation*}
    \textstyle
    J(T) = \bigcup_{i \in T} B_i
    \quad\text{and}\quad
    K(S, T) = (\some{n \in \NN} n \in S \land n \in T).
  \end{equation*}
  We claim that $S \in \Sigma^\NN$ is in the image of $\nbhmap[X]$ if, and only if,
  \begin{enumerate}
  \item 
    \label{item:sober-regular-mono-join}%
    for all $T, T' \in \Sigma^\NN$ if $J(T) = J(T')$ then $K(S, T) = K(S, T')$,
  \item 
    \label{item:sober-regular-mono-top}%
    $K(S, \NN) = \top$, and
  \item 
    \label{item:sober-regular-mono-meet}%
    for all $T, T', T'' \in \Sigma^\NN$ if
      $J(T) \cap J(T') = J(T'')$ then
      $K(S, T) \land K(S, T') = K(S, T'')$.
  \end{enumerate}
  It is evident that we could rewrite these conditions as a rather unwieldy equality between two semidecidable sets. The first condition says that $K(S, {-})$ factors through~$J$, while the remaining two ensure that the factorization commutes with finite meets. For $S = \nbh{x}$ both conditions are easily checked using the fact that $K(\nbh{x}, T) = (x \in \bigcup_{i \in T} B_i)$.
  Conversely, if the conditions hold for some $S \in \Sigma^\NN$ then we may define $p : \topol{B} \to \Sigma$ by
  \begin{equation*}
    \textstyle
    p \left(\bigcup_{i \in T} B_i \right) = K(S, T),
  \end{equation*}
  which is a $\sigma$-homomorphism by \cref{lemma:when-induced-by-homomorphism}. As $X$ is sober there exists $x \in X$ such that $p = \hat{x}$, and so, for every $n \in \NN$,
  \begin{equation*}
    (n \in S) =
    K (S, \set{n}) =
    p(B_n) =
    (x \in B_n),
  \end{equation*}
  which proves $S = \nbh{x}$.
\end{proof}

\subsection{The synthetic KLST theorem}
\label{sec:pointwise-continuity-theorem}

The classic KLST theorem states that all computable maps from a computable complete separable metric space to a computable metric space are computably continuous. Dieter Spreen generalized it in several ways to computable maps between certain effective $T_0$-spaces. We aim for the version of the theorem that speaks about maps into regular spaces, so we first need a constructive version of regularity.

A topological space $(X, \mathcal{T})$ is \defemph{(pointwise) regular} when for every $x \in U \in \mathcal{T}$ there are disjoint open sets $S, T \in \mathcal{T}$ such that $x \in S \subseteq U$ and $T \cup U = X$.

\begin{proposition}
  \label{prop:separable-metric-regular}
  A separable metric space is pointwise regular.
\end{proposition}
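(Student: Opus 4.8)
The plan is to mimic the classical proof of regularity for metric spaces: given a point inside an open set, shrink a ball around it and separate the point from the rest by the open \emph{exterior} of a slightly smaller closed ball. The only genuinely constructive work will be to check that this exterior is a Malcev open (a countable union of basic balls) and that the covering condition holds via the located comparison of reals rather than a dichotomy.

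First I would fix a dense sequence $(a_i)_{i \in \NN}$ and the basic balls $B_{i,j} = B(a_i, 2^{-j})$, so that $\mathcal{T} = \topol{B}$ consists of the countable unions of the $B_{i,j}$. Given $x \in U \in \mathcal{T}$, write $U$ as such a union; since $x \in U$ it lies in one of the balls of the union, and that ball is contained in $U$, so there is a basic ball $B(a,r)$ with $x \in B(a,r) \subseteq U$. Recentering, with $\epsilon := r - d(x,a) > 0$ the triangle inequality gives $B(x,\epsilon) \subseteq B(a,r) \subseteq U$.

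Next I would set $S := \set{y \in X \such d(x,y) < \epsilon/2}$ and $T := \set{y \in X \such d(x,y) > \epsilon/2}$. Then $x \in S \subseteq B(x,\epsilon) \subseteq U$ is immediate, and $S \cap T = \emptyset$ since no point satisfies both $d(x,y) < \epsilon/2$ and $d(x,y) > \epsilon/2$. The ball $S$ is open because every open ball is a countable union of basic balls, by the same density argument used in the proof of \cref{prop:csm-sober}. For the covering condition $T \cup U = X$ I would invoke the located comparison of reals: since $\epsilon/2 < \epsilon$, for every $y$ either $d(x,y) > \epsilon/2$, giving $y \in T$, or $d(x,y) < \epsilon$, giving $y \in B(x,\epsilon) \subseteq U$. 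This is exactly the step where classical regularity would appeal to excluded middle, whereas constructively the comparison discharges it without any case analysis on equality.

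The main obstacle is showing that $T$ is a Malcev open, i.e. a countable union of basic balls. I would exhibit $T = \bigcup_{(i,j) \in I} B_{i,j}$ with index set $I := \set{(i,j) \such d(x,a_i) > \epsilon/2 + 2^{-j}}$, which is semidecidable because strict inequality on $\mathbb{R}$ is semidecidable, and hence a countable subset of $\NN \times \NN$. The inclusion from right to left is a direct triangle-inequality estimate, and the reverse inclusion uses density: for $y$ with $d(x,y) > \epsilon/2$, choosing $j$ with $2^{-j}$ small relative to $d(x,y) - \epsilon/2$ and then $a_i$ within $2^{-j}$ of $y$ places $(i,j)$ in $I$ with $y \in B_{i,j}$. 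I would emphasize that forming this union needs no choice, since $I$ is defined outright, and that density is used only to verify pointwise membership for the reverse inclusion. With $T$ thereby shown to be open, disjoint from $S$, and to cover the complement of $U$, the space is pointwise regular.
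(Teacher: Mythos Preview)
Your proof is correct and follows essentially the same route as the paper's: reduce to an open ball containing~$x$, shrink it, and take the exterior of an intermediate closed ball as~$T$, then show~$T$ is open as a countable union of basic balls indexed by a semidecidable set of pairs~$(i,j)$. The only differences are cosmetic---you recenter at~$x$ and split with thresholds $\epsilon/2$ and~$\epsilon$, whereas the paper keeps the original center and uses $(r-q)/3$ and $(2r+q)/3$---and you are more explicit about invoking cotransitivity of the reals for the covering $T \cup U = X$, which the paper leaves implicit.
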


\begin{proof}
  Let $(M, d)$ be a metric space with a dense sequence $(x_n)_{n \in \NN}$. It suffices to verify the regularity condition for an open ball, so suppose $x \in B(y, r)$. Let $q = d(x,y)$, $S = B(x, (r - q)/3)$ and $T = \set{z \in M \such d(y, z) > (2 r + q) / 3}$. Clearly, $S$ and $T$ are disjoint, $x \in S$ and $B(y,r) \cup T = M$. The set $T$ is open because it is the union of the countable family of basic open balls
  \begin{equation*}
    \set{B(x_n, 2^{-k}) \such
      n, k \in \NN \land d(x_n, y) > 2^{-k} + (2 r + q) / 3
    }. \qedhere
  \end{equation*}
\end{proof}

If pointwise regularity is to be imposed on the codomain, what condition should restrict the domain of the map appearing in the KLST theorem? By analyzing Spreen's proof and his notion of ``witness for non-inclusion'', we obtain the following notion.

\begin{definition}
  \label{def:spreen-space}%
  A \defemph{Spreen space} is a space $(X, \mathcal{T})$ in which every point separated from an overt subset by a semidecidable subset is also separated from it by an open one.
\end{definition}

\noindent
Precisely: if $T \subseteq X$ is overt $x \in S \in \Sigma^X$ and $S \cap T = \emptyset$, then there is $U \in \mathcal{T}$ such that $x \in U$ and $U \cap T = \emptyset$.

Thus, a Spreen space is one in which the pointwise open sets, while coarser than the semidecidable ones, are still fine enough to witness non-inclusion in an overt subset (hence the original name). At the moment we are only able to provide trivial examples, such as a space equipped with the intrinsic topology. Nevertheless, the KLST theorem is expressed naturally using Spreen spaces.

\begin{theorem}[KLST]
  \label{thm:synthetic-klst}%
  A map from an overt Spreen space to a regular space is pointwise continuous.
\end{theorem}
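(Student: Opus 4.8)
The plan is to unfold the definition of pointwise continuity and verify it head-on, letting the defining property of Spreen spaces carry the argument. Fix a map $f : X \to Y$ from an overt Spreen space $(X, \mathcal{T}_X)$ to a regular space $(Y, \mathcal{T}_Y)$, and let $V \subseteq Y$ be pointwise open. Since open sets are unions of basic opens, it suffices to produce, for each $x \in \inv{f}(V)$, an open set $U \in \mathcal{T}_X$ with $x \in U \subseteq \inv{f}(V)$; this exhibits $\inv{f}(V)$ as pointwise open. So fix $x \in \inv{f}(V)$. As $V$ is pointwise open and $f(x) \in V$, there is an open $W \in \mathcal{T}_Y$ with $f(x) \in W \subseteq V$.

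First I would apply regularity of $Y$ at the point $f(x) \in W$ to obtain disjoint open sets $S, T \in \mathcal{T}_Y$ with $f(x) \in S \subseteq W$ and $T \cup W = Y$. Pulling back along $f$, both $\inv{f}(S)$ and $\inv{f}(T)$ are semidecidable subsets of $X$, since the inverse image of a semidecidable set is semidecidable (it is just precomposition with $f$ into $\Sigma$). This is the one place where overtness of the domain enters: because $\inv{f}(T)$ is a semidecidable subset of the overt space $X$, it is itself overt. Moreover $x \in \inv{f}(S)$ as $f(x) \in S$, and $\inv{f}(S) \cap \inv{f}(T) = \inv{f}(S \cap T) = \emptyset$ because $S$ and $T$ are disjoint.

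Now the hypotheses of \cref{def:spreen-space} are exactly met: the point $x$ lies in the semidecidable set $\inv{f}(S)$, which is disjoint from the overt subset $\inv{f}(T)$. The Spreen property therefore yields an open $U \in \mathcal{T}_X$ with $x \in U$ and $U \cap \inv{f}(T) = \emptyset$. It remains to check $U \subseteq \inv{f}(V)$: for any $z \in U$ we have $f(z) \notin T$, and since $T \cup W = Y$ this forces $f(z) \in W \subseteq V$, so $z \in \inv{f}(V)$. Hence $x \in U \subseteq \inv{f}(V)$ with $U$ open, as required.

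As the introduction promises, there is no real obstacle to overcome: once the vocabulary is in place the proof is essentially a bookkeeping exercise, with each hypothesis used exactly once — regularity to manufacture the separating open $T$ together with the cover $T \cup W = Y$, overtness to upgrade $\inv{f}(T)$ to an overt subset, and the Spreen property to trade the merely semidecidable witness $\inv{f}(S)$ for a genuine open neighborhood $U$. The only thing to watch is that the covering equation $T \cup W = Y$ is precisely what closes the argument, since it is what lets us pass from $f(z) \notin T$ to $f(z) \in W$ for $z \in U$. The genuine difficulty, deferred to later sections, is not this theorem but the production of non-trivial Spreen spaces.
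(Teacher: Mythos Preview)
Your proof is correct and follows essentially the same route as the paper's: regularity yields the separating pair $S,T$ with $T\cup W=Y$, overtness of~$X$ makes $\inv{f}(T)$ overt, the Spreen property trades $\inv{f}(S)$ for an open~$U$, and the covering condition gives $U\subseteq\inv{f}(V)$. The only cosmetic difference is that you start from a pointwise open~$V$ and first pass to an open $W\subseteq V$, whereas the paper directly takes~$V$ open; both are equivalent since pointwise opens are unions of opens.
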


\begin{proof}
  Consider any $f : X \to Y$, as in the statement of the theorem. Let $x \in X$ and $f(x) \in V \subseteq Y$ where $V$ is open. Because $Y$ is regular, there are disjoint open sets $S, T \subseteq Y$ that $f(x) \in S \subseteq V$ and $T \cup V = Y$. Notice that the inverse image $\inv{f}(T)$ is a semidecidable subset of an overt space, hence overt. Then $x \in \inv{f}(S)$, and $\inv{f}(S)$ is semidecidable and disjoint from the overt $\inv{f}(T)$. Therefore, there is an open $U \subseteq X$ such that $x \in U$ and $U \cap \inv{f}(T) = \emptyset$, but then $\inv{f}(T) \cup \inv{f}(V) = X$ implies $U \subseteq \inv{f}(V)$, as required.
\end{proof}

It is clear from the proof that we could formulate variations of the theorem. For example, we could require overtness of the codomain instead of the the domain. Or we could drop overtness altogether and replace regularity of the codomain~$Y$ with the following condition: if $y \in V$ and $V \subseteq Y$ is open, then there exists disjoint semidecidable~$S$ and overt~$T$ such that $y \in S$ and $V \cup T = Y$. Is this a relevant notion of space?

In order to recover the classic KLST theorem we would have to know that a complete separable metric space is an overt Spreen space. This is a properly intuitionistic requirement, because it contradicts classical logic. Therefore, we work toward showing that in the context of synthetic computability theory, or the effective topos, there is a rich supply of Spreen spaces.

\section{Synthetic computability}
\label{sec:synth-comp}

Everything we have done so far is valid in any topos satisfying Dependent Choice, as well as in Bishop-style constructive mathematics. We now move to the effective topos. However, rather than working directly with the topos, we identify axioms which are valid in it, and then keep working constructively using the additional axioms. We call this setting \emph{synthetic computability}~\cite{Bauer06,bauer17}. It is a reformulation and extension of Fred Richman's setup from~\cite{richman83}.

Double negation plays an important role in synthetic computability. Say that a proposition $\phi$ is \defemph{$\lnot\lnot$-stable}, or just \defemph{stable}, if $\lnot\lnot \phi \lthen \phi$. Similarly, a subset $S \subseteq X$ is stable when its membership predicate is stable, $\lnot\lnot(x \in S) \lthen x \in S$ for all $x \in X$.

The first axiom is the familiar
\begin{quote}
  \emph{\textbf{Markov principle:} If not all terms of a binary sequence are zero, then
    the sequence contains a one.}
\end{quote}
The axiom is equivalent to the statement that semidecidable truth values are $\neg\neg$-stable, i.e., $\all{p \in \Sigma} \neg\neg p \lthen p$.

% The XXXX axiom is a generalization of countable choice:
% %
% \begin{quote}
%   \emph{\textbf{Stable Countable Choice:} A total relation on a stable subset of
%     natural numbers has a choice function.}
% \end{quote}
% %
% That is, if $T \subseteq \NN$ is $\neg\neg$-stable, and for every $n \in T$ there is $x \in X$ such that $\phi(n, x)$ then there is a choice function $f : T \to X$ such that $\phi(n, f(n))$ for all $n \in T$. The axiom is valid for the realizability interpretation because a stable subset of~$\NN$ corresponds to a numbered set with canonical realizers, cf.~\cite[?]{BauerPhD}.

The second axiom corresponds to the fact that there is an computable enumeration of c.e.~sets:
\begin{quote}
  \emph{\textbf{Enumerability Axiom:} There are countably many countable subsets of
    natural numbers.}
\end{quote}
As is traditional in computability theory, we let~$W$ denote a fixed enumeration of $\Sigma^\NN$. The axiom is equivalent to Fred Richman's axiom CFP which states that there are countably many partial maps with countable graphs~\cite{richman83}.

The last axiom needs an introduction. A Turing machine may be run on any input tape, even one that it is not designed for. It may get stuck, diverge or output nonsense, but it will always do something. This observation is so trivial that it is rarely made explicit, yet it is essential in many computability theory proofs and constructions. Its topological manifestation is the following notion.

\begin{definition}
  A subset $X \subseteq Y$ is an \defemph{intrinsic subspace} of~$Y$ when
  the restriction map ${-} \cap X : \Sigma^Y \to \Sigma^X$ is surjective.
\end{definition}

\noindent
In other words, every $S \in \Sigma^X$ is the restriction of some $T \in \Sigma^Y$.
Our last axiom states:
\begin{quote}
  \emph{\textbf{Stable Subspace Axiom:}}
    A stable subset of the natural numbers is an intrinsic subspace.
\end{quote}
In the effective topos a $\lnot\lnot$-stable subobject of~$\NN$ is just an ordinary subset $X \subseteq \NN$, where each $n \in X$ is realized by itself. The object $\Sigma^X$ is the numbered set whose elements are restrictions of c.e.~sets to~$X$, and a number~$k$ realizes $U \in \Sigma^X$ when $U = \set{n \in X \such \cf_k(n) \defined}$. Because it makes sense to apply the $k$-th partial computable map $\cf_k$ to any number, not just those in~$X$, $k$ also realizes a c.e.~set~$V \subseteq \NN$ which restricts to~$U$, hence the stable subspace axiom is realized by the identity function.

Henceforth we adopt the above axioms, unless otherwise stated. Anyhow, we shall make all their applications explicit.

\subsection{Partial and multi-valued maps}
\label{sec:partial-multi-valued}

The Stable Subspace Axiom allows us to extend semidecidable sets, but sometimes we wish to extend maps, and partial maps in particular. We also define multivalued maps, which we will need later on.

A \defemph{partial map} $f : X \parto Y$ is a map $f : \supp{f} \to Y$ defined on a subset $\supp{f} \subseteq X$, called the \defemph{support} of~$f$. Equivalently, it is a map $f : X \to \tilde{Y}$ whose values are subsets of~$Y$ with at most one element,
\begin{equation*}
  \tilde{Y} = \set{S \subseteq Y \such \all{y,y' \in S} y = y'}.
\end{equation*}
The elements of $\tilde{Y}$ are called \defemph{partial values}. The empty set $\emptyset \in \tilde{Y}$ plays the role of the undefined value, and the singletons $\set{y}$ with $y \in Y$ the total values.
The support of a partial map $f : X \to \tilde{Y}$ is computed as $\supp{f} = \set{x \in X \such \some{y \in Y} y \in f(x)}$.

A \defemph{multivalued} map $f : X \multito Y$ is a map $f : X \to \inhab{Y}$ whose values are inhabited subsets
of~$Y$,
\begin{equation*}
  \inhab{Y} = \set{S \subseteq Y \such \some{y \in Y} y \in S}.
\end{equation*}

The partial maps $\NN \to \tilde{\NN}$ in the effective topos do \emph{not} correspond to the
partial computable maps. To get the desired correspondence we must take partial maps whose support is semidecidable. 
For this purpose we define the \defemph{(semidecidable) lifting $\lift{Y}$} of~$Y$ to be the set of partial values whose inhabitation is semidecidable,
\begin{equation*}
  \lift{Y} = \set{S \in \tilde{Y} \such (\some{y \in Y} y \in S) \in \Sigma}.
\end{equation*}
The support of a map $f : \NN \to \lift{\NN}$ is indeed semidecidable because $n \in \supp{f}$ is equivalent to $\some{k \in \NN} k \in f(n)$, which is semidecidable by definition.
We shall only consider partial maps $X \to \lift{Y}$.

% As a first consequence of the Enumeration axiom, let us show that $\pcm$ is countable.

% \begin{proposition}
%   \label{prop:nton-countable}%
%   $\pcm$ is countable.
% \end{proposition}

% \begin{proof}
%   Because $\Sigma^{\NN \times \NN}$ is countable, it suffices to find a surjective multivalued map $r : \Sigma^{\NN \times \NN} \multito (\pcm)$. We can then apply countable choice to extract a single-valued map from the multivalued one.

%   Given an enumeration $e : \NN \to 1 + \NN \times \NN$, define the map $f_e : \NN \to \lift{\NN}$ by
%   %
%   \begin{equation*}
%     f_e(m) = \set{n \in \NN \such
%       \some{i \in \NN}
%         e(i) = (n, m) \land \all{j < i} \fst(e(j)) \neq n
%     }.
%   \end{equation*}
%   %
%   That is, to get the value of $f_e(m)$, we search in the enumeration $e$ for the first pair whose first component is $m$, and output the second component. Now define the desired multivalued surjection by
%   %
%   \begin{equation*}
%     r(S) = \set{
%       f_e \such
%       \text{$e$ enumerates $S$}
%     }.
%   \end{equation*}
%   %
%   This is a surjection because, for any $f : \pcm$, its graph $\set{(m,n) \in \NN \times \NN \such f(m) = n}$ is countable and
%   %
%   \begin{equation*}
%     r(\set{(m,n) \in \NN \times \NN \such f(m) = n}) = \set{f}.
%     \qedhere
%   \end{equation*}
% \end{proof}

% \noindent
% Let $\cf : \NN \to (\pcm)$ be the enumeration from the above proof.

The Stable Subspace Axiom allows us to extend partial maps.

\begin{proposition}
  \label{prop:extend-f}%
  A map $f : T \to \lift{\NN}$ whose domain $T \subseteq \NN$ is a stable subset has an
  extension $\bar{f} : \pcm$.
\end{proposition}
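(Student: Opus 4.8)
The plan is to represent $f$ by its evaluation graph, extend that graph with the Stable Subspace Axiom, and then repair single-valuedness by a first-found selection. First I would record that $f$ amounts to a semidecidable, single-valued relation. Set
\[
  G = \set{(n,m) \in T \times \NN \such m \in f(n)}.
\]
Being the graph of a function, $G$ is single-valued, and it is semidecidable: its support $\supp{f} \subseteq T$ is semidecidable (as for $\NN$, since $n \in \supp{f} \liff \some{k} k \in f(n)$), and on the support membership $m \in f(n)$ reduces to the decidable equation $m = v(n)$ for the unique value $v(n)$ of $f(n)$; guarding a decidable predicate by a semidecidable one yields a semidecidable predicate, so $G \in \Sigma^{T \times \NN}$.

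Next I would transport the situation to $\NN$ and extend. Under a pairing bijection $\NN \times \NN \cong \NN$ the subset $T \times \NN$ corresponds to a subset of $\NN$ whose membership is equivalent to $n \in T$, hence stable because $T$ is. The Stable Subspace Axiom then makes $T \times \NN$ an intrinsic subspace of $\NN \times \NN$, so $G$ is the restriction of some $\hat G \in \Sigma^{\NN \times \NN}$ with $\hat G \cap (T \times \NN) = G$. The crucial consequence is that for $n \in T$ the fibre of $\hat G$ over $n$ coincides with that of $G$; in particular $\hat G$ is single-valued over $T$ and there agrees with $f$.

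Off $T$ the relation $\hat G$ may be multivalued, so the final step is a single-valued selection. Since $\hat G$ is semidecidable it is countable; fixing an enumeration $e : \NN \to \one + (\NN \times \NN)$ of it, I would define $\bar{f} : \pcm$ by taking $\bar{f}(n)$ to be the second coordinate of the first pair enumerated with first coordinate $n$, i.e.
\[
  \bar{f}(n) = \set{m \such \some{k} \big(e(k) = (n,m) \land \all{k' < k} \lnot \some{m'} e(k') = (n,m')\big)}.
\]
Then $\bar{f}(n)$ has at most one element (the minimal such $k$ is unique) and its inhabitation, equivalent to $\some{k,m} e(k) = (n,m)$, is semidecidable, so $\bar{f}(n) \in \lift{\NN}$. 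To see that $\bar{f}$ extends $f$, observe that for $n \in T$ every pair of $\hat G$ over $n$ already lies in $G$ and so carries the single second coordinate dictated by $f(n)$ (or there are none at all), whence the selection returns exactly $f(n)$.

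I expect the main obstacle to be precisely this interplay between extension and functionality: the Stable Subspace Axiom extends $G$ only as a plain semidecidable set and does not respect single-valuedness away from $T$, so the first-found selection is what restores it. The verification that the selection reproduces $f$ on $T$ hinges on the exact identity $\hat G \cap (T \times \NN) = G$ together with single-valuedness of $G$ over $T$; by contrast, the semidecidability of the evaluation graph in the first step is routine.
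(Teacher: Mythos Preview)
Your proof is correct and follows essentially the same strategy as the paper: encode $f$ as a semidecidable relation, extend it via the Stable Subspace Axiom, and then restore single-valuedness by a first-found selection on an enumeration. The only difference is packaging---the paper extends each fibre $\inv{f}(\set{n}) \subseteq T$ separately and invokes Countable Choice twice to collect the extensions and their enumerations, whereas you extend the whole graph $G \subseteq T \times \NN$ in a single application of the axiom, which is marginally tidier but otherwise the same argument.
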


\begin{proof}
  For every $n \in \NN$, $\inv{f}(\set{n}) = \set{m \in T \such n \in f(m)}$ is semidecidable, and so it has a semidecidable extension to $\NN$ by the Stable Subspace Axiom. Use countable choice twice, first to obtain a map $S : \NN \to \Sigma^\NN$ such that $S(n) \cap T = \inv{f}(\set{n})$ for every $n \in \NN$, and then a map $e : \NN \to \one + \NN$ such that $k \mapsto e(\pair{n,k})$ enumerates $S(n)$ for every $n \in \NN$.
  The desired extension $\bar{f} : \pcm$ may be defined by
  \begin{equation*}
    \bar{f}(m) = \set{n \in \NN \such
    \some{i \in \NN}
      e(i) = m \land \pi_1(i) = n \land
      \all{j < i} e(j) \neq m
    }. \qedhere
  \end{equation*}
\end{proof}

The use of Countable Choice in the previous proposition is essential for getting a single-valued extension. Without it we can only hope to get a multivalued one, as demonstrated by the following proposition.

\begin{proposition}
  \label{prop:multi-extension}
  Suppose $X \subseteq Y$ is a stable subset of a countable set $Y$. Then every $U : X \to \Sigma$ extends to a multivalued $\overline{U} : Y \multito \Sigma$.
\end{proposition}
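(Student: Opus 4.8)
The plan is to pull the subset $X$ back along an enumeration of $Y$ to a stable subset of $\NN$, extend it there by the Stable Subspace Axiom, and then push the extension forward along the (multivalued inverse of the) enumeration. First I would fix an enumeration $e : \NN \to \one + Y$, which exists because $Y$ is countable. Since the disjoint sum $\one + Y$ has a decidable left/right tag, for each $n$ I may decide whether $e(n) = \star$ or $e(n)$ lies in the right summand. I then set $\tilde{X} = \set{n \in \NN \such \some{x \in X} e(n) = x} \subseteq \NN$ and check that it is stable: given $\lnot\lnot(n \in \tilde{X})$, I decide the tag of $e(n)$; if $e(n) = \star$ then $n \in \tilde{X}$ is decidably false, contradicting the double negation, while if $e(n)$ is the right-summand element $y$ then $n \in \tilde{X}$ is equivalent to $y \in X$, which is stable by hypothesis. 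This is the one place where stability of $X$ is genuinely used.

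Next I would define $V \in \Sigma^{\tilde{X}}$ by $V(n) = U(e(n))$. This is well defined because, for $n \in \tilde{X}$, injectivity of the coproduct inclusion singles out a unique $x \in X$ with $e(n) = x$, and $U(e(n)) \in \Sigma$. Since $\tilde{X}$ is a stable subset of $\NN$, the Stable Subspace Axiom makes it an intrinsic subspace, so the restriction map $\Sigma^\NN \to \Sigma^{\tilde{X}}$ is surjective; I fix some $\bar{V} \in \Sigma^\NN$ restricting to $V$, i.e.\ $\bar{V}(n) = U(e(n))$ for all $n \in \tilde{X}$.

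Finally I would define the multivalued extension $\overline{U} : Y \multito \Sigma$ by $\overline{U}(y) = \set{\bar{V}(n) \such e(n) = y}$. Each value is a subset of $\Sigma$, and it is inhabited because $e$ is surjective onto $\one + Y$, so some $n$ satisfies $e(n) = y$. It genuinely extends $U$: for $x \in X$, every $n$ with $e(n) = x$ witnesses $x \in X$ and hence lies in $\tilde{X}$, whence $\bar{V}(n) = U(x)$, so $\overline{U}(x) = \set{U(x)}$ collapses to the total value $U(x)$.

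I do not expect a serious obstacle, as the argument is direct once the right pullback is identified. The two points to watch are that the pullback $\tilde{X}$ really is stable, which relies on the decidable coproduct tag together with stability of $X$, and that the construction uses no Countable Choice: both $e$ and $\bar{V}$ are obtained by a single instance of existential elimination rather than a family of choices. This last point is precisely why the extension is only multivalued rather than single-valued: for $y \notin X$ the values $\bar{V}(n)$ over the fibre $e(n) = y$ need not agree, and selecting one coherently would require exactly the Countable Choice used in \cref{prop:extend-f}.
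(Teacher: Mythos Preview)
Your proposal is correct and follows essentially the same route as the paper's proof: pull $X$ back along an enumeration of $Y$ to a stable subset of $\NN$, apply the Stable Subspace Axiom there, and push the extension forward as a multivalued map along the fibres of the enumeration. Your write-up is in fact more detailed than the paper's, which simply asserts stability of the pullback and leaves the verification that $\overline{U}$ is inhabited and extends $U$ implicit; your observation that no Countable Choice is invoked is also exactly the point the paper emphasizes in the paragraph preceding the proposition.
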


\begin{proof}
  Let $e : \NN \to 1 + Y$ be an enumeration of~$Y$ and let $S = \inv{e}(X) = \set{n \in \NN \such e(n) \in X}$. Because $S$ is a stable subset of $\NN$ and $V = \set{n \in S \such e(n) \in U}$ is semidecidable, by the Stable Subspace Axiom~$V$ has an extension $\overline{V} \in \Sigma^\NN$. We may take $\overline{U}(x) = \set{p \in \Sigma \such \some{n \in \NN} e(n) = x \land p = (n \in \overline{V})}$.
\end{proof}

The situation in which semidecidable subsets of a subset extend to multivalued semidecidable subsets of the larger set will figure later on, so we give it a name.

\begin{definition}
  A \defemph{weak intrinsic subspace} $X \subseteq Y$ is a subset such that every $S : X \to
  \Sigma$ has a multivalued extension $\bar{S} : Y \multito \Sigma$. Similarly, an
  injective map $i : X \to Y$ is a \defemph{weak intrinsic inclusion} when every $S : X \to
  \Sigma$ extends along~$i$ to a multivalued $\bar{S} : Y \multito \Sigma$.
\end{definition}

In synthetic computability there are weak intrinsic subspaces which are not intrinsic subspaces.

\begin{proposition}
  \label{prop:weak-counter-example}%
  The inclusion $i : \NNx \to \Sigma^{\NN \times \NN}$ of the Baire space by the map
  $i(\alpha) = \set{(n, \alpha(n)) \such n \in \NN}$ is a weak intrinsic inclusion which is not an intrinsic inclusion.
\end{proposition}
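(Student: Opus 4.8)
The plan is to treat the two halves of the statement separately, since they call for quite different techniques: the weak inclusion is a soft construction using search-decoding, whereas the failure of intrinsicness forces us to produce a genuinely non-continuous semidecidable predicate.

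For the weak inclusion, I would extend $S : \NNx \to \Sigma$ by decoding a function out of the enumeration data of a semidecidable set. Call $\beta \in \NNx$ a \emph{selection through} $T \in \Sigma^{\NN \times \NN}$ when $\all{n \in \NN} (n, \beta(n)) \in T$, and set
\begin{equation*}
  \bar{S}(T) = \set{ S(\beta) \such \beta \in \NNx \land \all{n \in \NN} (n, \beta(n)) \in T } \cup \set{\bot}.
\end{equation*}
The disjunct $\set{\bot}$ is thrown in only to guarantee that $\bar{S}(T)$ is inhabited, as a multivalued map requires. On a genuine graph $T = i(\alpha)$ single-valuedness of $\alpha$ makes $\alpha$ the only selection, so $\bar{S}(i(\alpha)) = \set{S(\alpha), \bot}$ contains $S(\alpha)$, which is exactly the extension condition. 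The operational content making $\bar{S}$ a legitimate map $\Sigma^{\NN \times \NN} \multito \Sigma$ is that from any enumeration of $T$ — available by Countable Choice, since $T$ is a semidecidable, hence countable, subset of $\NN \times \NN$ — one searches each column for a value and thereby reconstructs a selection; on a graph this recovers $\alpha$ regardless of the enumeration, and it is precisely the dependence of the search on the chosen enumeration that is absorbed into the multivaluedness. This mirrors the single- versus multivalued contrast already seen in \cref{prop:extend-f} and \cref{prop:multi-extension}.

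To show $i$ is not an intrinsic inclusion I would first pin down the image of the pullback $\inv{i}$. A semidecidable $\mathcal{U} \in \Sigma^{\Sigma^{\NN \times \NN}}$ is monotone and finitely generated — a realizer witnessing $T \in \mathcal{U}$ halts after seeing only finitely much of an enumeration of $T$ — so, restricting to graphs, the sets $\inv{i}(\mathcal{U})$ are exactly the \emph{product-open} subsets of $\NNx$, those of the form $\set{\alpha \such \some{i} \alpha \supseteq \sigma_i}$ for a countable family of finite partial functions $\sigma_i$. The assertion thus reduces to exhibiting a semidecidable subset of $\NNx$ that is not product-open.

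This last step is the main obstacle. No such set could exist if $\NNx$ were an overt Spreen space, for then \cref{thm:synthetic-klst} would force every semidecidable subset to be pointwise open; the construction must therefore draw on the fact that Baire space is neither overt nor covered by the hypotheses of the continuity theorem, so that KLST fails for $\Sigma$-valued operators. I would build the offending $S$ by a waiting argument, diagonalizing against the enumeration $W$ of semidecidable sets and using the synthetic recursion theorem to arrange a predicate that commits to membership on a function only after unboundedly much of that function has been inspected, so that no finite constraint $\sigma_i$ can force membership, while halting consistently across all realizers of the same function. The delicate point — and the reason this cannot simply be read off from the classical KLST theorem, which concerns total operators — is to certify simultaneously that $S$ is a well-defined semidecidable subset and that it provably escapes every product-open set, which I expect to require the full strength of the synthetic axioms.
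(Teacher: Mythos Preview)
Your weak-inclusion construction does not meet the requirement. A multivalued extension $\bar S$ of $S$ along $i$ must satisfy $\bar S(i(\alpha)) = \set{S(\alpha)}$, a singleton on the image; that is what \cref{prop:multi-extension} actually produces and what every subsequent use (\cref{thm:multi-wso}, \cref{prop:sober-separable-ershov}, \cref{thm:cb-sober-spreen}) consumes. Adding $\bot$ unconditionally yields $\bar S(i(\alpha)) = \set{S(\alpha),\bot}$, which is wrong whenever $S(\alpha)=\top$; omitting it leaves $\bar S(T)$ empty whenever $T$ has no total selection, so $\bar S$ no longer lands in $\inhab{\Sigma}$. Your column-by-column search through an enumeration of $T$ only produces a \emph{partial} selection in general, and totality is not decidable, so there is no soft repair. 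The paper takes a different and much shorter route: the image of $i$ is $\lnot\lnot$-stable (Markov's principle, applied to the semidecidable ``every column is inhabited'' and the decidable ``no column has two values''), and $\Sigma^{\NN\times\NN}\cong\Sigma^\NN$ is countable by the Enumerability Axiom, so \cref{prop:multi-extension} applies verbatim. The synthetic axioms are doing essential work; the ``soft construction using search-decoding'' you hoped for does not go through.

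For the second half your identification of the range of $\inv{i}$ with countable unions of basic cylinders is correct and matches the paper, which obtains it from \cref{prop:sigma-N-scott-topology}. Where you propose an explicit recursion-theoretic diagonalisation, the paper simply quotes \cite[Corollary~7.3]{BauerLesnik12} for a semidecidable subset of Baire space that is not such a union; your route would be more self-contained, but as written it is only a statement of intent, with neither the diagonal object nor the verification specified. One side remark: your claim that \cref{thm:synthetic-klst} would force every semidecidable subset of an overt Spreen space to be pointwise open misreads that theorem, which only constrains maps into \emph{regular} codomains, and the Sierpi\'nski space $\Sigma$ is not regular.
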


\begin{proof}
  Clearly, $i$~is an injection, and an application of Markov principle shows that its image is stable. Thus, $\NNx$ is a weak intrinsic subset of $\Sigma^{\NN \times \NN}$ by \cref{prop:multi-extension}. But it is not an intrinsic subspace. Indeed, by \cref{prop:sigma-N-scott-topology}, proved below, every semidecidable subset of $\Sigma^\NN$ is a union of basic opens whose inverse images under~$i$ are open balls in $\NNx$, for the ultrametric~\eqref{eq:cantor-metric}.
  However, by~\cite[Corollary 7.3]{BauerLesnik12} there are semidecidable subsets of~$\NNx$ which are not unions of balls.
\end{proof}

\subsection{Recursion theorem and its consequences}
\label{sec:recurs-theor-its}

The synthetic recursion theorem is reminiscent of Lawvere's fixed point theorem~\cite{lawvere69} which states that any endomap on $X$ has a fixed point if there is a surjection $A \to X^A$.
Say that $X$ has the \defemph{multivalued fixed-point property} when for every $f : X \multito X$ there is $x \in X$ such that $x \in f(x)$, called a \defemph{fixed point} of~$f$.

\begin{theorem}[Recursion theorem]
  \label{thm:recursion-theorem}%
  If there is a surjection $\NN \to X^\NN$ then $X$ has the multivalued fixed-point property.
\end{theorem}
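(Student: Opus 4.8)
The plan is to transport Lawvere's diagonal argument to the multivalued setting, using the given surjection $s : \NN \to X^\NN$ in place of Lawvere's surjection $A \to X^A$. The only ingredient beyond the classical fixed-point trick is a single application of Countable Choice, which the ambient setting supplies, and which serves to discharge the multivaluedness of $f$. So the overall shape is: diagonalize $s$, resolve the multivalued values into an honest function by choice, then feed that function back through the surjection to land on a fixed point.

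Concretely, I would fix a multivalued map $f : X \multito X$, i.e.\ a map $f : X \to \inhab{X}$, and form the diagonal $d : \NN \to X$ defined by $d(n) = s(n)(n)$. Composing with $f$ gives, for each $n$, an inhabited subset $f(d(n)) \in \inhab{X}$. I would then apply Countable Choice to obtain a genuine function $h : \NN \to X$ satisfying $h(n) \in f(d(n)) = f(s(n)(n))$ for every $n \in \NN$; this is the step that converts the multivalued $f$ into single-valued data. Since $h \in X^\NN$ and $s$ is surjective, there is some $n_0 \in \NN$ with $s(n_0) = h$. Evaluating at $n_0$ then yields $h(n_0) = s(n_0)(n_0) = d(n_0)$, so that, setting $x = h(n_0)$, we have $x = d(n_0) \in f(d(n_0)) = f(x)$, which is exactly the fixed point demanded by the multivalued fixed-point property.

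The argument is short, so the only thing requiring care is the choice step. The point to verify is that selecting $h(n) \in f(s(n)(n))$ needs nothing stronger than Countable Choice: the index set is $\NN$ and each fiber $f(d(n))$ is inhabited by hypothesis, so no appeal to full choice or to Dependent Choice is necessary. I expect this to be the main (and essentially the only) obstacle; once it is granted, the diagonalization and the use of surjectivity are purely formal, mirroring the single-valued case where $h(n)$ would instead be defined outright as $t(s(n)(n))$ for an endomap $t$.
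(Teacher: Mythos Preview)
Your argument is correct and follows essentially the same route as the paper: diagonalize the surjection, use Countable Choice to select $h(n)\in f(s(n)(n))$, pull $h$ back through the surjection to some index $n_0$, and observe that $h(n_0)=s(n_0)(n_0)$ is a fixed point of~$f$. The paper's proof is identical up to notation (it writes $e$, $g$, $k$ where you write $s$, $h$, $n_0$, and does not name the diagonal separately).
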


\begin{proof}
  Let $e : \NN \to X^\NN$ be an enumeration and $f : X \multito X$ a multivalued map whose fixed point needs to be constructed. For every $n \in \NN$ there is $x \in X$ such that $x \in f(e(n)(n))$, therefore by countable choice there is a map $g : \NN \to X$ such that $g(n) \in f(e(n)(n))$ for every $n \in \NN$. There is $k \in \NN$ such that $g = e(k)$. But now $e(k)(k)$ is a fixed point of~$f$ because $e(k)(k) = g(k) \in f(e(k)(k))$.
\end{proof}

The theorem is discussed in detail in~\cite{bauer17}, where a justification for its name can be found, too.
Both Lawvere's fixed-point theorem and the synthetic recursion theorem are valid in pure constructive mathematics, but have no interesting instances without extra axioms. One such axiom is the Enumeration Axiom, because it is precisely the hypothesis of the recursion theorem for~$\Sigma$. Also, since $\Sigma^\NN \cong \Sigma^{\NN \times \NN} \cong (\Sigma^\NN)^\NN$, the space $\Sigma^\NN$ has the multivalued fixed-point property, too.

We are interested in topological consequences of the recursion theorem. Let $\NNx$ be the set of antimonotone binary sequences
\begin{equation*}
  \NNx = \set{t \in \two^\NN \such \all{n \in \NN} t(n) \geq t(n+1)}.
\end{equation*}
It helps to think of $\NNx$ as the one-point compactification of~$\NN$. A natural number~$n$ corresponds to the sequence $\overline{n} : \NN \to \two$ defined by $\overline{n}(i) = 1 \liff i < n$ that drops to zero after~$n$ ones. The point at infinity $\infty$ is the sequence of all ones. We do not distinguish notatationally between a number $n$ and the corresponding element $\overline{n}$ of~$\NNx$, and think of $\NN$ as a subset of $\NNx$.

The order on $\NNx$ defined by
\begin{equation*}
  t \leq u \iff \all{n \in \NN} t(n) \leq u(n)
\end{equation*}
extends the usual one on~$\NN$, and has $\infty$ as the largest element.
There is also a strict order
\begin{equation*}
  t < u \iff \some{n \in \NN} t(n) < u(n).
\end{equation*}
For $n \in \NN$ and $t \in \NNx$, it is decidable whether $n < t$, because it is equivalent to $t(n) = 1$.
Likewise, $t \leq n$ is decidable because it is equivalent to $t(n) = 0$. Both facts together imply that, for all $n \in \NN$ and $t \in \NNx$, either $n < t$ or $t \leq n$.

% Given a sequence $\alpha : \NN \to X$, a family of sequences $\beta : \NN \to X^\NN$, and $t \in \NNx$, define the sequence $\append{\alpha}{\beta}{t} : \NN \to X$ as
% %
% \begin{equation*}
%   (\append{\alpha}{\beta}{t})(n) =
%   \begin{cases}
%     \alpha(n) & \text{if $n < t$,}\\
%     \beta(t)(n-t) & \text{if $t \leq n$.}
%   \end{cases}
% \end{equation*}
% %
% Thus $\append{\alpha}{\beta}{t}$ is like $\alpha$ for the first~$t$ terms, whereupon it becomes $\beta(t)$.

Rosolini's dominance is a quotient of $\NNx$ by the map $t \mapsto (t < \infty)$. Indeed, $t < \infty$ is semidecidable because it means $\some{n \in \NN} t(n) = 0$, while for any $\alpha \in \two^\NN$ we have
\begin{equation*}
  (\some{n \in \NN} \alpha(n) = 1)
  \iff
  r(\alpha) < \infty,
\end{equation*}
where $r : \two^\NN \to \NNx$ is the retraction $r(\alpha)(n) = \min (\alpha(0), \ldots,
\alpha(n))$.

% \begin{lemma}
%   \label{lem:NNx-in-stable}%
%   Suppose $\phi$ is a $\neg\neg$-stable predicate on $\NNx$. If $\phi(\infty)$ and $\all{n \in \NN} \phi(n)$ then $\all{t \in \NNx} \phi(t)$.
% \end{lemma}

% \begin{proof}
%   This is an exercise in intuitionistic logic.
%   %
%   Given any $t \in \NNx$, $\neg\phi(t)$ would imply both $t \neq \infty$ and $t \not\in \NN$, which is impossible.
%   Because $\phi$ is $\neg\neg$-stable, $\phi(t)$ follows.
% \end{proof}

When we think of an element $t \in \NNx$ as the ``time'' at which a semidecidable condition is satisfied, we recognize in the quotient $\NNx \to \Sigma$ the ``waiting arguments'' of classical computability theory, in which one waits for a semidecision procedure to terminate. A related kind of argument is a (possible unsuccessful) search for a value satisfying a semidecidable condition, which we express as follows in the synthetic setting.

\begin{lemma}
  \label{lemma:extraction}
  Suppose $\some{x \in X} \phi(x)$ is semidecidable. Then there is $T \in \lift{X}$ such that
  $\some{x \in X} \phi(x)$ is equivalent to $\some{x \in T} \phi(x)$.
\end{lemma}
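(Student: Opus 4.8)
The plan is to extract a witness of $\phi$ as a partial element of $X$, switching it on exactly when the semidecidable search for $p := \some{x \in X} \phi(x)$ succeeds. Since the goal ``there is $T \in \lift{X}$ \ldots'' is a mere proposition, I may first unfold the hypothesis $p \in \Sigma$ and assume a concrete $f \in \two^\NN$ with $p \liff \some{n \in \NN} f(n) = 1$. Replacing $f$ by its first-hit version, for which $f(n) = 1$ holds for at most one $n$, I keep $p \liff \some{n \in \NN} f(n) = 1$ while ensuring the search fires at most once.

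Next I would use Countable Choice to select witnesses. For each $n \in \NN$ the truth value $f(n) = 1$ is decidable, and whenever $f(n) = 1$ we have $p$, hence $\some{x \in X} \phi(x)$; so for every $n$ there is an element of $X + \one$ that lies in $X$ and satisfies $\phi$ when $f(n) = 1$, and equals the dummy $\star$ otherwise. Crucially this family is inhabited for every $n$ regardless of whether $X$ itself is inhabited, which is what lets me apply Countable Choice without first deciding $p$. The outcome is a map $h : \NN \to X + \one$ such that $f(n) = 1$ implies $h(n) \in X$ and $\phi(h(n))$.

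I would then define $T$ to consist of the single element picked out at the unique hit of $f$, namely $T = \set{x \in X \such \some{n \in \NN} (f(n) = 1 \land h(n) = x)}$, reading $X$ as a subset of $X + \one$. Because $f$ fires at most once, $T$ is a subsingleton; its inhabitation is $\some{n \in \NN} f(n) = 1$, which is exactly $p$ and therefore semidecidable, so $T \in \lift{X}$. Finally, when $T$ is inhabited its element is some $h(n)$ with $f(n) = 1$, hence satisfies $\phi$, so $\some{x \in T} \phi(x)$ coincides with the inhabitation of $T$, i.e.\ with $p$; the reverse implication $\some{x \in T} \phi(x) \lthen p$ is automatic since $T \subseteq X$.

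The steps are each routine, and the one point demanding care is arranging the Countable Choice so that the family is uniformly inhabited: naively choosing a witness $x_n$ with $\phi(x_n)$ only makes sense when $f(n) = 1$, and to invoke Countable Choice I must hand it a family inhabited at every index, including those $n$ with $f(n) = 0$ where no witness is available -- this is exactly what the dummy value $\star$ supplies. Equivalently, one may phrase the choice as selecting, for each $n$, a partial value in $\lift{X}$ whose support is $f(n) = 1$. I expect this inhabitation bookkeeping, together with the first-hit normalization that secures the subsingleton condition, to be the only real content; the argument uses just Countable Choice and the definition of $\Sigma$, and does not appear to need the synthetic axioms.
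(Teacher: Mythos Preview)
Your proposal is correct and follows essentially the same approach as the paper: obtain a decidable predicate witnessing semidecidability, use Countable Choice with the dummy value $\star \in \one + X$ to select witnesses uniformly, and take the singleton determined by the first hit. The only cosmetic difference is that you normalize $f$ to its first-hit version before applying choice, whereas the paper applies choice first and imposes the minimality condition $\all{k < n} b_k = \star$ in the definition of~$T$; these are equivalent ways of securing the subsingleton property.
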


\begin{proof}
  There exists $c \in \NN \to \two$ such that $\some{x \in X} \phi(x)$ is equivalent to $\some{n \in \NN} c_n = 1$. For every $n \in \NN$ there is $u \in 1 + X$ such that
  \begin{equation*}
    c_n = 1 \liff (u \in X \land \phi(u)).
  \end{equation*}
  Indeed, if $c_n = 0$ then we take $u = \star$, and if $c_n = 1$ then there is $x \in X$ such that $\phi(x)$, so we take $u = x$. By Countable Choice there is a sequence $b : \NN \to \one + X$ such that $c_n = 1 \liff b_n \in X \land \phi(b_n)$ for all $n \in \NN$.
  Now define
  \begin{equation*}
    T = \set{x \in X \such \some{n \in \NN} b_n = x \land \all{k < n} b_k = \star}.
  \end{equation*}
  Clearly, any two elements of $T$ are equal, and $\some{x \in X}{x \in T}$ is semidecidable because it is equivalent to the statement $\some{n \in \NN} c_n = 1$. Thus we have $T \in \lift{X}$, as required.

  Notice that every $x \in T$ satisfies $\phi(x)$, therefore $\some{x \in T} \phi(x)$ implies $\some{x \in X} \phi(x)$. Conversely, if there is $x \in X$ such that $\phi(x)$ then there is a least~$n \in \NN$ such that $c_n = 1$,
  which means that $b_n \in T$.
\end{proof}

In~\cite{BauerLesnik12} the following continuity principle is shown to hold in many (necessarily non-classical) varieties of constructive mathematics, including synthetic computability.

\begin{theorem}[WSO]
  \label{thm:wso}%
  If $U : \NNx \to \Sigma$ is such that $\infty \in U$ then $n \in U$ for some $n \in \NN$.
\end{theorem}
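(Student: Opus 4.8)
The plan is to prove the statement by a diagonal argument driven by the Recursion theorem (\cref{thm:recursion-theorem}). The heart of the matter is to manufacture a single self-referential point $s \in \NNx$ satisfying the equivalence $(s < \infty) \liff U(s)$; this is the synthetic incarnation of the computability-theoretic ``waiting argument'', in which one lets $s$ sit at $\infty$ until $U$ confirms membership and makes it drop to a finite value exactly when that happens. The crucial observation is that producing such an $s$ does \emph{not} require a fixed-point property for $\NNx$ (which it does not possess), but only one for $\Sigma$: the desired $s$ is read off from a fixed point of a multivalued self-map of $\Sigma$.

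Concretely, I would first recall that the map $q : \NNx \to \Sigma$ given by $q(s) = (s < \infty)$ exhibits $\Sigma$ as a quotient of $\NNx$, and is in particular surjective. I then define a multivalued map $g : \Sigma \multito \Sigma$ by
\[
  g(p) = \set{ U(s) \such s \in \NNx \land (s < \infty) = p }.
\]
Surjectivity of $q$ ensures that $g(p)$ is inhabited for every $p \in \Sigma$, so $g$ is a bona fide multivalued map. Since the Enumerability Axiom provides a surjection $\NN \to \Sigma^\NN$, the set $\Sigma$ has the multivalued fixed-point property, and \cref{thm:recursion-theorem} furnishes $p$ with $p \in g(p)$. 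Unfolding the membership $p \in g(p)$ delivers precisely the point we are after: some $s \in \NNx$ with $(s < \infty) = U(s) = p$.

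It then remains to feed in the hypothesis $U(\infty) = \top$ (reading $\infty \in U$ as $U(\infty) = \top$). If $s = \infty$ then $U(s) = U(\infty) = \top$, whence $(s < \infty) = \top$ and so $s < \infty$, contradicting $s = \infty$; therefore $\lnot(s = \infty)$. For $s \in \NNx$ the proposition $s = \infty$ is literally $\lnot(s < \infty)$, so we have obtained $\lnot\lnot(s < \infty)$, and because $s < \infty$ is a semidecidable truth value, Markov's principle upgrades this to $s < \infty$. Hence $s$ drops, i.e.\ $s = \overline{k}$ for the least $k$ with $s(k) = 0$, and the equivalence gives $U(\overline{k}) = (s < \infty) = \top$, that is $k \in U$.

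I expect the only genuinely creative step to be the first: recognising that the intensional ``drop at the halting stage'' can be repackaged as a fixed point of a map on $\Sigma$ rather than on $\NNx$. The one point demanding care is that $g$ really is a multivalued map, i.e.\ that its values are inhabited, which is exactly the surjectivity of the quotient $q$; after that the argument is a short computation whose only non-trivial ingredient is the single, essential appeal to Markov's principle to pass from $\lnot(s = \infty)$ to $s < \infty$.
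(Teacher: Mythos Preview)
Your argument is correct and matches the paper's approach: the paper does not prove \cref{thm:wso} directly (it cites~\cite{BauerLesnik12}), but your proof is precisely the single-valued specialisation of the argument the paper gives for the Multivalued WSO (\cref{thm:multi-wso}), with the same fixed-point map on~$\Sigma$ and the same use of Markov's principle to conclude $s < \infty$.
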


We need the generalization of the principle to multivalued maps.

\begin{theorem}[Multivalued WSO]
  \label{thm:multi-wso}
  If $U : \NNx \multito \Sigma$ is such that $U(\infty) = \set{\top}$ then there is $n \in
  \NN$ such that $\top \in U(n)$.
\end{theorem}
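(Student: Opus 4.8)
The plan is to deduce \cref{thm:multi-wso} from the single-valued principle \cref{thm:wso}. The key reduction is that it suffices to produce a \emph{single-valued} selection of $U$, that is, a genuine function $\psi : \NNx \to \Sigma$ with $\psi(t) \in U(t)$ for every $t \in \NNx$. Indeed, any such $\psi$ is automatically an element of $\Sigma^\NNx$, and since $U(\infty) = \set{\top}$ we get $\psi(\infty) \in \set{\top}$, i.e.\ $\infty \in \psi$; applying \cref{thm:wso} to $\psi$ then yields $n \in \NN$ with $\psi(n) = \top$, and as $\psi(n) \in U(n)$ this gives $\top \in U(n)$, which is the desired conclusion. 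Thus the whole content of the theorem is the passage from the multivalued $U$ to a single-valued selection.

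The difficulty, and the reason \cref{thm:recursion-theorem} is needed, is that this selection cannot be obtained by any direct choice. There is no choice principle over the uncountable $\NNx$, so one cannot pick $\psi(t) \in U(t)$ pointwise. Countable Choice does furnish a selection of the restriction $U \restriction \NN : \NN \multito \Sigma$, but such a selection need not cohere into a function on all of $\NNx$: since $t = \infty$ is undecidable, a rule defined case-by-case on $\NN$ and on $\infty$ is not a legitimate map out of $\NNx$. Nor can $\psi$ be read off from $U$ semidecidably, because the predicate $\top \in U(t)$ is in general not semidecidable. What is required is a selection that is \emph{uniform}, equivalently one that extends continuously to $\infty$; the single value $U(\infty) = \set{\top}$ then forces $\psi(\infty) = \top$, exactly as in the ``waiting argument'' of the classical KLST proof, where one runs the realizer of $U$ on the input $\infty$ and observes it halt after reading only a finite initial segment.

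I would obtain this uniform selection as a fixed point provided by \cref{thm:recursion-theorem}, whose self-reference plays the synthetic role of ``the code of $U$'' that the classical argument manipulates intensionally. Concretely, the aim is to exhibit a \emph{total} multivalued endomap on a function space with the multivalued fixed-point property, so that the recursion theorem applies, whose fixed points are exactly the single-valued selections of $U$, and to read $\psi$ off such a fixed point. I expect this to be the main obstacle. The delicate point is inhabitation: the naive endomap $w \mapsto \set{\text{selections of } U}$ is useless, because proving its values inhabited is precisely the selection problem we are trying to solve. The endomap must instead be arranged so that its values are provably inhabited using only the inhabitedness of the sets $U(t)$ together with Countable Choice \emph{along the enumeration} witnessing the fixed-point property, never choice over $\NNx$, while its fixed points remain genuine selections. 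Once such a $\psi$ is in hand, the argument closes immediately via \cref{thm:wso}.
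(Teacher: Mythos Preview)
Your plan has a genuine gap: you never say which ``function space with the multivalued fixed-point property'' you would use, and the natural candidate $\Sigma^{\NNx}$ is not known to have one. \Cref{thm:recursion-theorem} requires a surjection $\NN \to X^\NN$; the Enumeration Axiom supplies this for $X = \Sigma$ and hence for $X = \Sigma^\NN$, but nothing in the axioms gives such a surjection for $X = \Sigma^{\NNx}$. Retreating to selections of $U \restriction \NN$, which live in $\Sigma^\NN$, runs straight back into the obstacle you yourself identify: extending a selection from $\NN$ to all of $\NNx$ cannot be done by case analysis on $t = \infty$. So the scaffolding for your fixed-point argument is missing, and you have not indicated how to build it.

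The paper avoids the selection problem entirely by applying the recursion theorem on $\Sigma$ itself, not on any function space. Using the surjection $\NNx \to \Sigma$, $t \mapsto (t < \infty)$, one defines $f : \Sigma \multito \Sigma$ by
\[
  f(p) = \set{q \in \Sigma \such \some{t \in \NNx} p = (t < \infty) \land q \in U(t)},
\]
whose values are inhabited simply because this quotient is onto and each $U(t)$ is inhabited---no choice beyond what you already have. A fixed point $p \in f(p)$ yields some $t \in \NNx$ with $p = (t < \infty)$ and $p \in U(t)$; were $t = \infty$ we would have $\bot = p \in U(\infty) = \set{\top}$, absurd, so $t \neq \infty$ and Markov's principle gives $t < \infty$, i.e.\ $t \in \NN$ and $\top = p \in U(t)$. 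Thus no global selection of $U$ is ever built, and \cref{thm:wso} is not invoked at all; the argument is the recursion theorem on $\Sigma$ plus Markov's principle, with the quotient $\NNx \to \Sigma$ doing the work you were hoping a selection would do.
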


\begin{proof}
  Given $U$ as in the statement of the theorem, define the multivalued map $f : \Sigma \multito \Sigma$ by
  \begin{equation*}
    f(p) = \set{q \in \Sigma \such \some{t \in \NNx} p = (t < \infty) \land q \in U(t)}.
  \end{equation*}
  By recursion theorem there is a fixed point $p \in f(p)$, and so there is $x \in \NNx$ such that $p = (x < \infty)$ and $p \in U(x)$. It suffices to show that $x < \infty$. If $x = \infty$ then $\bot = p \in U(\infty)$, which contradicts the assumption. Therefore $x \neq \infty$ and by Markov principle $x < \infty$.
\end{proof}

We shall use the following corollary repeatedly.

\begin{corollary}
  \label{cor:wso}
  If $f : \NNx \to X$ and $U : X \multito \Sigma$ is such that $U(f(\infty)) = \set{\top}$ then $\top \in U(f(n))$ for some $n \in \NN$.
\end{corollary}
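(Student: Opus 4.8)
The plan is to derive this directly from Multivalued WSO (\cref{thm:multi-wso}) by composing $U$ with $f$. Since $f : \NNx \to X$ is an ordinary (total, single-valued) map, the composite
\begin{equation*}
  V : \NNx \multito \Sigma, \qquad V(t) = U(f(t)),
\end{equation*}
is again a multivalued map: its value $V(t) = U(f(t))$ is an inhabited subset of $\Sigma$ precisely because the values of $U$ are inhabited. No union over fibers is needed here, so there is no loss in forming the composite — this is the only point that requires even a moment's care, since composition of genuinely multivalued maps would be more delicate.

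Next I would check that the hypothesis transfers verbatim. By definition $V(\infty) = U(f(\infty))$, and the assumption $U(f(\infty)) = \set{\top}$ says exactly that $V(\infty) = \set{\top}$. Thus $V$ satisfies the premise of \cref{thm:multi-wso}.

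Applying Multivalued WSO to $V$ then yields an $n \in \NN$ with $\top \in V(n)$, which unfolds to $\top \in U(f(n))$, as desired.

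I expect no real obstacle: the corollary is a one-line consequence of the theorem, and the entire content is the observation that precomposing a multivalued map with a total map preserves the multivalued structure and carries the value at $\infty$ along unchanged.
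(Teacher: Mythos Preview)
Your proposal is correct and matches the paper's proof exactly: the paper's one-line proof is simply ``Apply \cref{thm:multi-wso} to $U \circ f$.'' The observation that precomposing a multivalued map with a total map yields a multivalued map is the only content, and you have handled it correctly.
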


\begin{proof}
  Apply \cref{thm:multi-wso} to $U \circ f$.
\end{proof}

With WSO in hand we can compute the intrinsic topology of $\Sigma^\NN$.

\begin{proposition}
  \label{prop:sigma-N-scott-topology}
  The intrinsic topology of $\Sigma^\NN$ is the Scott topology.
\end{proposition}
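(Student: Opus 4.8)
The plan is to establish the equality by proving that every semidecidable subset of $\Sigma^\NN$ is Scott-open; the reverse inclusion is automatic, since by the paper's definition a Scott-open set is in particular semidecidable, so $\ScottOpen[\Sigma^\NN] \subseteq \Sigma^{\Sigma^\NN}$ with no work. Recall that $(\Sigma^\NN, \subseteq)$ was already observed to be an $\omega$-algebraic \wCPO whose compact elements are the finite subsets of $\NN$, so its Scott topology has basic opens $\upper K = \set{S \in \Sigma^\NN \such K \subseteq S}$ for finite $K$. Fix then an arbitrary $U : \Sigma^\NN \to \Sigma$; I must check that $U$ is upward closed and inaccessible by suprema of chains, and in both cases the engine will be WSO.

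For upward closure, suppose $S \subseteq S'$ and $S \in U$; I want $S' \in U$. The idea is to interpolate between $S$ and $S'$ along a single semidecidable parameter. For $p \in \Sigma$ put $S^p = \set{k \in \NN \such k \in S \lor (p \land k \in S')}$, which is semidecidable uniformly in~$p$, so $p \mapsto S^p$ is a map $\Sigma \to \Sigma^\NN$ with $S^\bot = S$ and $S^\top = S'$ (using $S \subseteq S'$). Composing with $U$ gives $V : \Sigma \to \Sigma$, $V(p) = U(S^p)$, with $V(\bot) = \top$, and I need $V(\top) = \top$. Here I would invoke WSO on the map $W : \NNx \to \Sigma$, $W(t) = V(t < \infty)$: since $\infty < \infty$ is false, $W(\infty) = V(\bot) = \top$, so by \cref{thm:wso} there is $n \in \NN$ with $W(n) = \top$; as $n < \infty$ holds, $W(n) = V(\top)$, whence $V(\top) = \top$ and $S' \in U$.

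For inaccessibility, let $c : \NN \to \Sigma^\NN$ be a chain with $\Sup{n} c_n \in U$. Define $g : \NNx \to \Sigma^\NN$ by $g(t) = \set{k \in \NN \such \some{n \in \NN} n \le t \land k \in c_n}$; this is a legitimate map because $n \le t$ is decidable while $k \in c_n$ is semidecidable, so membership in $g(t)$ is semidecidable. Since $c$ is increasing, $g(m) = \bigcup_{n \le m} c_n = c_m$ for $m \in \NN$, and $g(\infty) = \Sup{n} c_n$. As $U(g(\infty)) = \top$, \cref{cor:wso} applied to $g$ and $U$ yields some $n \in \NN$ with $U(g(n)) = U(c_n) = \top$, i.e.\ $c_n \in U$, which is exactly inaccessibility.

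I expect both sub-arguments to be short once the maps are set up correctly. The one piece of genuine content — and the step I anticipate as the main obstacle — is upward closure: reducing it to the monotonicity of a map $V : \Sigma \to \Sigma$ via the interpolation $p \mapsto S^p$, after which monotonicity is just the one-dimensional instance that WSO settles. Inaccessibility, by contrast, is the sequential-continuity statement WSO was designed for, so there the only care needed is arranging $g$ so that its value at $\infty$ computes the supremum while its finite values recover the $c_m$.
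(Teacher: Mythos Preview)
Your proof is correct and follows essentially the same approach as the paper: both arguments apply WSO twice to an interpolation map $\NNx \to \Sigma^\NN$, once to get upward closure and once to get inaccessibility. The only difference is packaging---the paper phrases the target as the single equivalence ``$S \in U$ iff some finite subset of $S$ lies in $U$'' (i.e.\ $U$ is a union of basic Scott-opens), whereas you verify the two defining clauses of Scott-openness separately; the underlying WSO computations match up with the paper's two directions.
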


\begin{proof}
  Recall that the Scott topology on $\Sigma^\NN$ is the topology generated by the basic open sets of the form
  \begin{equation*}
    \upper T =
    \set{S \in \Sigma^\NN \such T \subseteq S}
  \end{equation*}
  where $T$ is a finite subset of~$\NN$. So for any $U : \Sigma^\NN \to \Sigma$ we have to prove that for all $S \in \Sigma^\NN$, $S \in U$ precisely when some finite subset of $S$ is already in $U$,
  \begin{equation*}
    S \in U \iff
    \some{T \subseteq S} T \in U \land \text{$T$ finite}.
  \end{equation*}
  For the implication from left to right, consider the map $f : \NNx \to \Sigma^\NN$ defined by
  \begin{equation*}
    f(t) = \set{k \in \NN \such k < t \land n \in S}.
  \end{equation*}
  Since $f(\infty) = S \in U$ there exists by WSO some $k < \infty$ such that $f(k) \in U$, and so $f(k)$ is the finite subset of $S$ we are looking for.

  For the converse, suppose $T \subseteq S$ is a finite subset of $S$ and $T \in U$. Consider the map $f : \NNx \to \Sigma^\NN$ defined by
  \begin{equation*}
    \textstyle
    f(t) = \bigcup_{i \in \NN} (\text{if $i < t$ then $T$ else $S$}).
  \end{equation*}
  As $f(\infty) = T \in U$ there exists by WSO some $k < \infty$ such that $S = f(k)
  \in U$.
\end{proof}

\subsection{Weak intrinsic subspaces of $\Sigma^\NN$}
\label{sec:weak-intrinsic-sigma-N}

Our plan is to transfer topological properties of $\Sigma^\NN$ to a countably based $T_0$-space~$X$ via the
basic neighborhood filter $\nbhmap : X \to \Sigma^\NN$, which was shown to be a topological embedding in \cref{prop:sigma-N-universal}.
However, since Spreen spaces involve semidecidable sets, not just open ones, we need to relate $\nbhmap[X]$ to semidecidable subsets, too. Ideally, $\nbhmap[X]$ would be an intrinsic embedding, but that is to stringent a requirement, as it would eliminate examples that we would like to keep, see \cref{prop:weak-counter-example}.
Luckily, $\nbhmap[X]$ being just a weak intrinsic inclusion suffices for our purposes. Dieter Spreen identified a sufficient condition for this to be the case, which he called effective limit passing.

\begin{definition}
  A countably based space is a \defemph{space with limit passing} when it is a $T_0$-space and the image of its basic neighborhood filter is stable.
\end{definition}

In symbols, $(X, \topol{B})$ has limit passing when, for all $S \in \Sigma^\NN$, if $\lnot\lnot\some{x \in X} S = \nbh[X]{x}$ then there exists a unique $x \in X$ such that $S = \nbh[X]{x}$. 
From a computational point of view, the limit passing condition says that a point $x \in X$ can be recovered when its neighborhood filter $\nbh[X]{x}$ qua semidecidable predicate on the indices of basic open sets. Here are some spaces with limit passing.

\begin{proposition}
  \label{prop:cb-sober-limit-passing}
  \mbox{}
  \begin{enumerate}
  \item A countably based sober space is a space with limit passing.
  \item A stable subspace of a space with limit passing is again such a space.
  \end{enumerate}
\end{proposition}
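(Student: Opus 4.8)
The plan is to unfold the definition of a space with limit passing into its two constituent requirements --- that the space be a $T_0$-space and that the image of its basic neighborhood filter be $\lnot\lnot$-stable --- and to verify each separately in the two parts. For the stability half I will repeatedly use that, under Markov principle, equality of \emph{semidecidable} truth values is a stable proposition, together with the elementary facts that stability is inherited by conjunctions, by implications whose conclusion is stable, and by universal quantifications.

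For the first part, let $(X, \topol{B})$ be a countably based sober space. That it is a $T_0$-space is already contained in \cref{cor:sober-image-equalizer}, where injectivity of $\nbhmap[X]$ is deduced from sobriety. For stability of the image I will use the equational description supplied by the same corollary: a sequence $S \in \Sigma^\NN$ lies in the image of $\nbhmap[X]$ exactly when it satisfies the three conditions characterizing the image in \cref{cor:sober-image-equalizer}. Each of these is a universally quantified implication whose conclusion --- or, in the case of the middle condition, whose sole content --- is an equality $K(S,T) = K(S,T')$ between elements of $\Sigma$. By Markov principle such an equality is stable; hence so is the implication, hence so is its universal closure, and finally so is the conjunction of the three conditions. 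Thus membership in the image is a stable predicate of $S$, which is precisely the asserted stability.

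For the second part, let $Y$ be a space with limit passing and $X \subseteq Y$ a stable subset carrying the subspace topology generated by $(B_n \cap X)_{n}$. A short check shows this is a base and that, for $x \in X$, the basic neighborhood filters agree, $\nbh[X]{x} = \nbh[Y]{x}$; in other words $\nbhmap[X] = \nbhmap[Y] \circ \iota$ for the inclusion $\iota : X \to Y$. Since $\nbhmap[Y]$ and $\iota$ are both injective, so is their composite, and $X$ is a $T_0$-space. For stability of the image, suppose $\lnot\lnot\some{x \in X} S = \nbh[X]{x}$ for some $S \in \Sigma^\NN$. A fortiori $\lnot\lnot\some{y \in Y} S = \nbh[Y]{y}$, so limit passing for $Y$ yields a unique $y \in Y$ with $S = \nbh[Y]{y}$. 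It remains to place $y$ inside $X$. Since $X$ is stable it suffices to derive $\lnot\lnot(y \in X)$: assuming $y \notin X$, any witness $x \in X$ with $S = \nbh[Y]{x}$ would satisfy $x = y$ by injectivity of $\nbhmap[Y]$, forcing $y \in X$ and contradicting $y \notin X$; so the original double negation is contradicted, proving $\lnot\lnot(y \in X)$. Hence $y \in X$ and $S = \nbh[X]{y}$, as needed.

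The main obstacle is the bookkeeping of stability in the first part: one must check that the comparisons appearing in the corollary's conditions are genuinely equalities of semidecidable truth values --- so that Markov principle applies --- rather than equalities of opens or of arbitrary propositions, and that every constructor used to assemble the conditions preserves stability. Once this is settled the argument is routine. The second part is comparatively easy; its only delicate point is the passage from $\lnot\lnot(y \in X)$ to $y \in X$, which is exactly where stability of the subset $X$ is consumed.
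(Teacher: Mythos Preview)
Your proof is correct and follows essentially the same approach as the paper: for part~(1) you invoke \cref{cor:sober-image-equalizer} and the Markov-principle stability of equality in~$\Sigma$, and for part~(2) you use that $\nbh[X]{x} = \nbh[Y]{x}$ together with stability of~$X$ in~$Y$. The only difference is cosmetic: the paper phrases part~(1) directly via the equalizer form (the image equals $\{S \mid \forall i.\, f(S)(i) = g(S)(i)\}$) and part~(2) as ``the image of $\nbhmap[X]$ is a stable subset of the stable image of $\nbhmap[Y]$'', whereas you unpack the three conditions and the $\lnot\lnot$ argument explicitly.
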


\begin{proof}
  \mbox{}
  \begin{enumerate}
  \item 
    Clearly, a countably based sober space $(X, \topol{B})$ is a $T_0$-space. By \cref{cor:sober-image-equalizer} there is an equalizer
    \begin{equation*}
      \xymatrix{
        {X}
        \ar[r]^{\nbhmap[X]}
        &
        {\Sigma^\NN}
        \ar@<0.25em>[r]^{f}
        \ar@<-0.25em>[r]_{g}
        &
        \Sigma^{I}
      }
    \end{equation*}
    This implies that the image of $\nbhmap[X]$ is a set of the form
    \begin{equation*}
      \set{S \in \Sigma^\NN \such
        \all{i \in I} f(S)(i) = g(S)(i)
      }.
    \end{equation*}
    By Markov principle equality on $\Sigma$ is stable, therefore the set is defined by a stable predicate on $\Sigma^\NN$.
    
  \item Let $(Y, \topol{B})$ be a space with limit passing and $X \subseteq Y$ a stable subset. The subspace topology on $X$ is induced by the base $B'_n = X \cap B_n$. Thus, for $x \in X$ we have $\nbh[X]{x} = \nbh[Y]{x}$ and so the image of $\nbhmap[X]$ is a stable subset of the image of $\nbhmap[Y]$, which is itself a stable subset.
  Of course $(X, \topol{B'})$ is a $T_0$-space because $T_0$ is a hereditary property. \qedhere
  \end{enumerate}
\end{proof}

Our first application of limit passing relates separability and overtness.

\begin{proposition}
  \label{prop:sober-separable-ershov}%
  If a sequence is dense with respect to a sober topology, then it is dense with respect
  to intrinsic topology.
\end{proposition}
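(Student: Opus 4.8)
The plan is to unwind both notions of density to statements about points and to show that every inhabited semidecidable set meets the dense sequence. So let $(a_i)_{i \in \NN}$ be dense in the sober topology $\topol{B}$, fix $S \in \Sigma^X$ together with a witness $x \in S$, and aim to produce some $i$ with $a_i \in S$. Since the intrinsic topology is finer than $\topol{B}$, this is genuinely stronger than what the hypothesis gives directly, and sobriety must be what bridges the gap. The strategy I would follow is to realise a sequence of dense points \emph{converging} to $x$ as a single honest map $f : \NNx \to X$, and then apply the continuity principle \cref{thm:wso}: because $\inv{f}(S) = S \circ f$ is automatically a semidecidable subset of $\NNx$ containing $\infty$ (as $f(\infty) = x \in S$), WSO forces some finite $n \in \inv{f}(S)$, i.e. $f(n) \in S$, and that value is one of the dense points.

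To build the convergent sequence I would exploit that $\nbh{x} = \set{n \in \NN \such x \in B_n}$ is semidecidable, hence enumerable; enumerating it and taking finite intersections yields a decreasing family of opens $C_0 \supseteq C_1 \supseteq \cdots$, each containing $x$ and cofinal among the basic neighbourhoods of~$x$. Each $C_k$ is inhabited by~$x$, so density in $\topol{B}$ produces a dense point inside it, and Countable Choice fixes a sequence with $a_{i_k} \in C_k$; by cofinality $a_{i_k} \to x$ in the topology $\topol{B}$. The delicate step — and the main obstacle — is to package this convergent sequence as a \emph{total} map $f : \NNx \to X$ with $f(\overline{k}) = a_{i_k}$ and $f(\infty) = x$. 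One cannot define $f$ by asking whether its argument is finite or equal to~$\infty$, since that case distinction is undecidable; instead I would prescribe $f(t)$ by describing its neighbourhood filter uniformly in $t$ and then invoke sobriety to recover the actual point of~$X$, in exact analogy with the way completeness manufactured the limit point in \cref{prop:csm-sober}. The work lies in arranging the filter so that it is semidecidable, is a $\sigma$-homomorphism $\topol{B} \to \Sigma$ for every $t$, and specialises to $\hat{a_{i_k}}$ at $\overline{k}$ and to $\hat{x}$ at~$\infty$; here sobriety, guaranteed by the hypothesis, is precisely what makes the value at~$\infty$ well defined.

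With such an $f$ in hand the proof closes immediately by the WSO step described above: $\infty \in \inv{f}(S)$ gives a finite $n$ with $a_{i_n} = f(n) \in S$, so the dense sequence meets $S$, which is the required density in the intrinsic topology. If constructing a single-valued $f$ through $X$ proves awkward, an alternative is to land in the universal space instead: using \cref{prop:sigma-N-universal} and the identification of the intrinsic topology of $\Sigma^\NN$ with the Scott topology (\cref{prop:sigma-N-scott-topology}), one may take the manifestly total map $t \mapsto \set{n_j \such j < t}$ into $\Sigma^\NN$ together with the multivalued extension of $S$ afforded by the weak intrinsic inclusion, and feed them into \cref{cor:wso}; the only care needed there is to ensure the extension returns exactly $\set{\top}$ at $\nbh{x}$, so that the hypothesis of \cref{cor:wso} is met.
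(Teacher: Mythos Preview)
Your overall architecture matches the paper's: enumerate $\nbh{x}$, take finite intersections $C_k = B_{n_0} \cap \cdots \cap B_{n_k}$, choose dense points $a_{i_k} \in C_k$ by Countable Choice, package as a map out of $\NNx$, and invoke a WSO principle. The gap is in the execution of your second alternative, which is in fact the route the paper takes. Your proposed map $t \mapsto \set{n_j \such j < t}$ into $\Sigma^\NN$ does send $\infty$ to $\nbh{x}$, so the hypothesis of \cref{cor:wso} is met; your worry there is misplaced, since a multivalued extension of $S$ along $\nbhmap$ is by definition single-valued and equal to $\set{S(x)} = \set{\top}$ on the image of $\nbhmap$. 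The real difficulty is at the \emph{conclusion}: \cref{cor:wso} yields only some finite $n$ with $\top \in \bar{S}(\set{n_0, \ldots, n_{n-1}})$, and this finite set is generally not the basic neighbourhood filter of any point of $X$. Outside the image of $\nbhmap$ the multivalued extension $\bar{S}$ is arbitrary, so this tells you nothing about membership in $S$. The paper repairs this by splicing in the full neighbourhood filter of the already-chosen dense point at finite stages,
\[
  f(t) = \set{n_j \such j < t} \cup \set{k \in \NN \such t < \infty \land k \in \nbh{a_{i_t}}},
\]
so that $f(n) = \nbh{a_{i_n}}$ for finite $n$; then $\top \in \bar{S}(f(n)) = \set{S(a_{i_n})}$ gives $a_{i_n} \in S$ directly.

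Your first alternative, landing in $X$ via sobriety and using plain WSO, is viable and arguably cleaner, but you have not named the ingredient that handles a general $t \in \NNx$, for which one cannot decide $t < \infty$ versus $t = \infty$ and hence cannot verify by cases that the prescribed filter is a $\sigma$-homomorphism. The missing step is \cref{prop:cb-sober-limit-passing}: the image of $\nbhmap$ is $\lnot\lnot$-stable, and since $\lnot\lnot(t < \infty \lor t = \infty)$ holds while each disjunct makes the $f(t)$ above a genuine neighbourhood filter, limit passing furnishes a point $g(t) \in X$ with $\nbh{g(t)} = f(t)$; then ordinary WSO applied to $S \circ g$ finishes.
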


\begin{proof}
  Let $(X, \topol{B})$ be a sober space with a dense sequence $(x_n)_{n \in
    \NN}$ and $y \in S \in \Sigma^X$. Combine \cref{prop:cb-sober-limit-passing} and \cref{prop:multi-extension}
  to extend~$S$ to $\bar{S} : \Sigma^\NN \multito \NN$ along $\nbhmap$.
  Let $a : \NN \to \NN$ be an enumeration of $y$.
  By Countable Choice there is a sequence $(i_n)_{n \in \NN}$ such that, for all $n \in \NN$,
  \begin{equation*}
    x_{i_n} \in B_{a(0)} \cap \cdots \cap B_{a(n)}.
  \end{equation*}
  Define $f : \NNx \to \Sigma^\NN$ by
  \begin{equation*}
    f(t) = \set{a(i) \such i < t} \cup \set{k \in \NN \such t < \infty \land k \in \nbh{x_{i_t}}}.
  \end{equation*}
  Note that $f(\infty) = \nbh{y}$ and $f(n) = \nbh{x_{i_n}}$ for $n \in \NN$.
  Because $\bar{S}(f(\infty)) = \set{\top}$ there is $n \in \NN$ such that
  $\top \in \bar{S}(f(n)) = \set{S(x_{i_n})}$, which gives the desired $x_{i_n} \in S$.
\end{proof}

\begin{corollary}
  \label{prop:separable-overt}
  Separable sober spaces, such as separable complete metric spaces and $\omega$-algebraic {\wCPOs}, are overt.
\end{corollary}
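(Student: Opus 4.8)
The plan is to read off overtness directly from \cref{prop:sober-separable-ershov}, so that all the real work has already been done. Let $(X, \topol{B})$ be a separable sober space with dense sequence $(x_n)_{n \in \NN}$, and let $\phi : X \to \Sigma$ be arbitrary; I must show that $\some{x \in X} \phi(x)$ is semidecidable. The subset $S = \set{x \in X \such \phi(x)}$ is semidecidable because $\phi$ takes values in~$\Sigma$, and the key claim is
\begin{equation*}
  \some{x \in X} \phi(x) \iff \some{n \in \NN} \phi(x_n).
\end{equation*}
The right-to-left direction is immediate. For left-to-right, if some $x$ satisfies $\phi(x)$ then $x \in S$, so $S$ is an inhabited semidecidable subset; by \cref{prop:sober-separable-ershov} the sequence $(x_n)$ is dense for the intrinsic topology and hence meets~$S$, yielding some $x_n$ with $\phi(x_n)$.

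With this equivalence established, the right-hand side is a countable join of semidecidable truth values, which is again semidecidable by the closure of $\Sigma$ under countable joins recorded in \cref{sec:semid-truth-valu} (using Countable Choice). Therefore $\some{x \in X} \phi(x) \in \Sigma$, which is exactly the statement that $X$ is overt.

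For the named examples I would invoke the sobriety results proved earlier: separable complete metric spaces are sober by \cref{prop:csm-sober}, and $\omega$-algebraic {\wCPOs} are sober by the proposition establishing sobriety of the Scott topology. Separability is part of the hypothesis in the metric case, while for an $\omega$-algebraic {\wCPO} $P$ I would take the countable set $\Compact{P}$ as the dense sequence: every nonempty basic Scott-open $\upper x$ already contains the compact element~$x$, and more generally the analysis of the Scott topology given above shows that any Scott-open set containing a point $y$ contains a compact element below~$y$, so $\Compact{P}$ is dense. Since the genuinely hard content is packaged inside \cref{prop:sober-separable-ershov}, I do not expect a real obstacle in the corollary itself; the only point deserving care is confirming that each class of examples supplies a dense sequence so that the general statement applies, and in particular that the compact elements of an $\omega$-algebraic {\wCPO} are dense, which follows from inaccessibility of Scott-opens by suprema of chains.
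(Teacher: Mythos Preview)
Your argument is correct and is essentially the same as the paper's: both invoke \cref{prop:sober-separable-ershov} to upgrade density from the given topology to the intrinsic topology, then reduce $\some{x \in X} \phi(x)$ to the semidecidable $\some{n \in \NN} \phi(x_n)$. Your additional paragraph verifying separability for the two named classes of examples (in particular that $\Compact{P}$ is dense in the Scott topology) is extra detail the paper leaves implicit, but it is accurate and in line with what was established earlier.
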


\begin{proof}
  If $(X, \mathcal{T})$ is sober and $(x_n)_{n \in \NN}$ a sequence in~$X$ which is dense with respect to~$\mathcal{T}$, then by \cref{prop:sober-separable-ershov} it is also dense for the intrinsic topology~$\Sigma^X$. But then for any $S \in \Sigma^X$ we have
  \begin{equation*}
    (\some{y \in X} y \in S) \iff
    (\some{n \in \NN} x_n \in S),
  \end{equation*}
  and the right-hand statement is semidecidable.
\end{proof}

\section{Spreen spaces in synthetic computability}
\label{sec:spreen-spaces-in-synthetic-computability}

Finally, here is a supply of Spreen spaces.

\begin{theorem}
  \label{thm:cb-sober-spreen}%
  A countably based sober space is a Spreen space.
\end{theorem}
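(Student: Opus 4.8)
The plan is to reduce the Spreen condition to a single ``waiting argument'' carried out on the universal space $\Sigma^\NN$. Fix an overt $T \subseteq X$, a semidecidable $S \in \Sigma^X$ with $x \in S$ and $S \cap T = \emptyset$; since the topology $\topol{B}$ is generated by the base, it suffices to produce a finite $F \subseteq \nbh{x}$ for which the basic open $B_F := \bigcap_{i \in F} B_i$ (which is open and contains~$x$) is disjoint from~$T$. By \cref{prop:cb-sober-limit-passing} a countably based sober space has limit passing, so its basic neighborhood filter $\nbhmap : X \to \Sigma^\NN$ is an injective topological embedding (\cref{prop:sigma-N-universal}) with stable image, and hence a weak intrinsic inclusion. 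I would use this to extend $S$ to a multivalued $\bar{S} : \Sigma^\NN \multito \Sigma$ along $\nbhmap$ (as in the proof of \cref{prop:sober-separable-ershov}, via \cref{prop:multi-extension}), so that $\bar{S}(\nbh{z}) = \set{S(z)}$ for every $z \in X$: on the image the extension is single-valued and agrees with~$S$.

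First I would exploit overtness: for a finite $F$ the truth value $\some{z \in T} z \in B_F$ is semidecidable, because $z \in B_F$ is semidecidable and existential quantification over the overt~$T$ preserves semidecidability. Enumerate $\nbh{x}$ as $(a_i)_{i \in \NN}$ and write $F_n = \set{a_0, \ldots, a_{n-1}}$, so that $F_n$ increases to $\nbh{x}$. Using \cref{lemma:extraction} together with Countable Choice I would pick, for each~$n$, a partial witness $W_n \in \lift{T}$ whose inhabitation is equivalent to $B_{F_n} \cap T \neq \emptyset$ and whose unique element, when it exists, is a point $z_n \in T \cap B_{F_n}$. The crucial point is that $z_n$ is an honest point of the space, so $\bar{S}(\nbh{z_n}) = \set{S(z_n)} = \set{\bot}$, since $z_n \in T$ and $S \cap T = \emptyset$. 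I then define $f : \NNx \to \Sigma^\NN$ by
\begin{equation*}
  f(t) = \set{a_i \such i < t} \cup \set{k \in \NN \such t < \infty \land \some{z \in W_t} k \in \nbh{z}},
\end{equation*}
whose membership is semidecidable, so $f$ is well defined. Then $f(\infty) = \nbh{x}$, while for finite~$n$, whenever $B_{F_n} \cap T \neq \emptyset$ the term $\nbh{z_n}$ absorbs $F_n$ (as $F_n \subseteq \nbh{z_n}$) and $f(n) = \nbh{z_n}$.

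Since $\bar{S}(f(\infty)) = \bar{S}(\nbh{x}) = \set{S(x)} = \set{\top}$, \cref{cor:wso} applied to $f$ and $\bar{S}$ yields an~$n$ with $\top \in \bar{S}(f(n))$. For this~$n$ the assumption $B_{F_n} \cap T \neq \emptyset$ is untenable: it would force $f(n) = \nbh{z_n}$ and hence $\top \in \bar{S}(f(n)) = \set{\bot}$, which is absurd. This directly establishes $B_{F_n} \cap T = \emptyset$ (a negated, hence stable, statement), so $U = B_{F_n}$ is the required open neighborhood. The main obstacle, and the reason overtness and the waiting argument are needed at all, is the multivaluedness of $\bar{S}$: off the image of $\nbhmap$ one cannot treat $\bar{S}$ as a semidecidable (Scott-open) predicate and simply read off a finite approximation, so the approximating sequence $f(n)$ must be arranged to stay inside the image — that is, to consist of neighborhood filters of genuine points of~$T$ — precisely so that $\bar{S}$ becomes single-valued and equal to $\bot$ there. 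Establishing the extension $\bar{S}$ itself, i.e.\ that $\nbhmap$ is a weak intrinsic inclusion, is the other delicate ingredient, resting on limit passing and the Stable Subspace Axiom.
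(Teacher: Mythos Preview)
Your proof is correct and is essentially the same argument as the paper's: enumerate $\nbh{x}$, use overtness and \cref{lemma:extraction} with Countable Choice to choose partial witnesses in $T \cap B_{F_n}$, build the same map $f : \NNx \to \Sigma^\NN$, extend $S$ to a multivalued $\bar{S}$ via limit passing and the Stable Subspace Axiom, and apply multivalued WSO. The only cosmetic differences are that you output the open set $B_{F_n}$ directly (a finite intersection of basic opens, hence open) whereas the paper passes to a single basic $B_m \subseteq B_{F_n}$, and you phrase the witness as $W_n \in \lift{T}$ rather than $g(n) \in \lift{X}$ with $g(n) \subseteq C_n$.
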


\begin{proof}
  Let $(X, \topol{B})$ be sober, $S \subseteq X$ semidecidable, $T \subseteq X$ overt, $S \cap T = \emptyset$, and $x \in S$. We seek a basic neighborhood $B_m$ of $x$ which is disjoint from $T$.

  Let $a : \NN \to \NN$ be an enumeration of $\nbh{x}$, and define
  $C_n = T \cap B_{a(0)} \cap \cdots \cap B_{a(n)}$.
  \Cref{lemma:extraction} and Countable choice together yield a map $g : \NN \to \lift{X}$ such that $g(n) \subseteq C_n$ for every $n \in \NN$, and moreover $g(n)$ is inhabited if, and only if, $C_n$ is inhabited.

  Define $f : \NNx \to \Sigma^\NN$ by
  \begin{equation*}
    f(t) =
    \set{a(i) \such i < t} \cup
    \set{k \in \NN \such t < \infty \land \some{y \in X} y \in g(t) \cap B_k}.
  \end{equation*}
  Notice that $f(\infty) = \nbh{x}$, while for every $n \in \NN$ we have:
  if $C_n$ is inhabited then $f(n) = \nbh{y}$ for some $y \in C_n$.

  Using \cref{prop:cb-sober-limit-passing} and \cref{prop:multi-extension},
  extend~$S$ along~$\nbhmap[X]$ to $\bar{S} : \Sigma^\NN \multito \Sigma$.
  Because $\bar{S}(f(\infty)) = \set{S(x)} = \set{\top}$ there is $n \in \NN$ such that $\top \in \bar{S}(f(n))$.
  If $C_n$ were inhabited then for some $y \in C_n$ we would have $f(n) = \nbh{y}$, hence $\top \in \bar{S}(f(n)) = \set{S(y)}$, from which the contradiction $y \in S \cap T = \emptyset$ would follow.
  Therefore, $C_n = \emptyset$, so it suffices to take any basic open $B_m$ such that $x \in B_m \in B_{a(0)} \cap \cdots \cap B_{a(n)}$.
\end{proof}

In conclusion, here is the classic KLST theorem.

\begin{corollary}[Classic KLST]
  Every map from a complete separable metric space to a metric space is pointwise continuous.
\end{corollary}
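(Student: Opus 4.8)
The plan is to recognize this corollary as a direct instance of the synthetic KLST theorem (\cref{thm:synthetic-klst}), whose hypotheses demand that the domain be an overt Spreen space and that the codomain be regular. Since all the heavy lifting was done in establishing \cref{thm:synthetic-klst} and \cref{thm:cb-sober-spreen}, the remaining work is purely a matter of checking these two hypotheses against the earlier propositions, with no new computation.

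For the domain, let $M$ be a complete separable metric space. First I would invoke \cref{prop:csm-sober} to conclude that $M$ is sober with respect to its metric topology; this topology is countably based by the family of balls $B(a_i, 2^{-j})$ centered at the points of the dense sequence. \Cref{thm:cb-sober-spreen} then promotes a countably based sober space to a Spreen space, and \cref{prop:separable-overt} shows that a separable sober space is overt. Assembling these, $M$ is an overt Spreen space, which is exactly the domain hypothesis of \cref{thm:synthetic-klst}. For the codomain, pointwise regularity is delivered by \cref{prop:separable-metric-regular}. With both hypotheses verified, \cref{thm:synthetic-klst} immediately gives pointwise continuity of any $f : M \to N$.

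The one point deserving care is that \cref{prop:separable-metric-regular} establishes regularity only for \emph{separable} metric spaces: the separating set $T$ is produced as a countable union of basic balls, which relies on a dense sequence in the codomain. Hence the bare phrase ``metric space'' in the statement should be read as a separable metric space, in keeping with the classical convention that a computable metric space carries a dense computable sequence by definition. Under that reading the corollary follows at once; this is the only step where the chain of implications could break, since without separability of the codomain the complement-type set $T$ need not be open in the Malcev sense and the regularity hypothesis would fail.

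There is no genuine obstacle beyond this bookkeeping — the interest of the result lies entirely in the prior theorems it invokes. In particular, the properly intuitionistic content flagged in the remark before \cref{sec:synth-comp} (that a complete separable metric space is an overt Spreen space, contradicting classical logic) is precisely what \cref{prop:csm-sober,thm:cb-sober-spreen,prop:separable-overt} together supply in the synthetic setting, so the corollary is their natural payoff.
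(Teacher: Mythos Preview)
Your proposal is correct and follows essentially the same route as the paper: invoke \cref{prop:csm-sober}, \cref{prop:separable-overt}, and \cref{thm:cb-sober-spreen} for the domain, \cref{prop:separable-metric-regular} for the codomain, and conclude via \cref{thm:synthetic-klst}. Your remark that \cref{prop:separable-metric-regular} is stated only for \emph{separable} metric spaces is a fair observation; the paper applies it to an arbitrary metric space without comment, so your reading of the codomain as implicitly separable is the right way to close that gap.
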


\begin{proof}
  A complete separable metric space is sober by \cref{prop:csm-sober}, overt by \cref{prop:separable-overt} and a Spreen space by \cref{thm:cb-sober-spreen}. Because a metric space is pointwise regular by \cref{prop:separable-metric-regular}, we may apply \cref{thm:synthetic-klst}, the synthetic KLST theorem.
\end{proof}

\subsubsection*{Acknowledgment}

I thank Dieter Spreen for explaining effective spaces to me when we first met in 1997 at Mathematical Foundations of Programming Semantics in Pittsburgh, PA. Dieter has always been very kind to me, and helped me with entering the research community of which I am a proud member today.

This material is based upon work supported by the Air Force Office of Scientific Research under award number FA9550-21-1-0024.

%%% Local Variables:
%%% mode: latex
%%% TeX-master: "kls-journal"
%%% End:

\bibliographystyle{amsplain}
\bibliography{kls}

\end{document}